\newtheorem{theorem}{Theorem}
\newtheorem{conjecture}[theorem]{Conjecture}
\newtheorem{corollary}[theorem]{Corollary}
\newtheorem{lemma}[theorem]{Lemma}
\newtheorem{proposition}[theorem]{Proposition}
\newcommand{\thistheoremname}{}
\newtheorem*{genericthm*}{\thistheoremname}
\newenvironment{namedthm*}[1]
  {\renewcommand{\thistheoremname}{#1}%
   \begin{genericthm*}}
  {\end{genericthm*}}
\theoremstyle{remark}
\newtheorem{example}[theorem]{Example}
\newtheorem*{remark}{Remark}
\DeclareMathOperator{\diag}{diag}
\DeclareMathOperator{\im}{im}
\DeclareMathOperator{\rank}{rank}
\DeclarePairedDelimiter{\abs}{\lvert}{\rvert}
\newcommand{\vd}{\mathbf{d}}
\newcommand{\vr}{\mathbf{r}}
\title[Critical groups under a generalized star-clique operation]{Critical groups of arithmetical structures\\ under a generalized star-clique operation}
\author{Alexander Diaz-Lopez}
\address[A.~Diaz-Lopez]{Department of Mathematics and Statistics, Villanova University, 800 Lancaster Ave (SAC 305), Villanova, PA 19085, USA}
\email{alexander.diaz-lopez@villanova.edu}
\author{Joel Louwsma}
\address[J.~Louwsma]{Department of Mathematics, Niagara University, Niagara University, NY 14109, USA}
\email{jlouwsma@niagara.edu}
\begin{document}

\begin{abstract}
An arithmetical structure on a finite, connected graph without loops is given by an assignment of positive integers to the vertices such that, at each vertex, the integer there is a divisor of the sum of the integers at adjacent vertices, counted with multiplicity if the graph is not simple. Associated to each arithmetical structure is a finite abelian group known as its critical group. Keyes and Reiter gave an operation that takes in an arithmetical structure on a finite, connected graph without loops and produces an arithmetical structure on a graph with one fewer vertex. We study how this operation transforms critical groups. We bound the order and the invariant factors of the resulting critical group in terms of the original arithmetical structure and critical group. When the original graph is simple, we determine the resulting critical group exactly.
\end{abstract}

\maketitle

\section{Introduction}
This paper describes the impact of a generalized star-clique operation on the critical group of an arithmetical structure. Let $G$ be a connected graph without loops with vertex set $\{v_1,v_2,\dotsc,v_n\}$, and let $A$ be the adjacency matrix of~$G$. An \emph{arithmetical structure} on~$G$ is a pair $(\vd,\vr)\in(\mathbb{Z}_{\geq0})^n\times (\mathbb{Z}_{>0})^n$ for which $(\diag(\vd)-A)\vr=\mathbf{0}$, where $\diag(\vd)$ is the diagonal matrix with the entries of~$\vd$ on the diagonal.\footnote{The effect of requiring the entries of~$\vd$ to be nonnegative rather than positive is only to allow the arithmetical structure $(\vd,\vr)=(0,1)$ on the graph with one vertex and no edges.} We consider arithmetical structures up to scaling by requiring that the greatest common divisor of the entries of~$\vr$ is~$1$. Letting $\delta_{i,j}$ denote the number of edges between vertices $v_i$ and~$v_j$, the condition $(\diag(\vd)-A)\vr=\mathbf{0}$ is equivalent to 
\[
d_ir_i=\sum_{j\in([n]\setminus\{i\})}\delta_{i,j}r_j \text{ for all }i\in[n],
\]
where $[n]$ denotes the set $\{1,2,\dotsc,n\}$. Thinking of an arithmetical structure as a labeling of the vertices of~$G$ by the entries of~$\vr$, this means that, at each vertex, the integer there is a divisor of the sum of the integers at adjacent vertices, counted with multiplicity if the graph is not simple. 

Arithmetical structures were first introduced by Lorenzini~\cite{L89}; matrices of the form $(\diag(\vd)-A)$ arise in algebraic geometry as intersection matrices of degenerating curves. The simplest example of an arithmetical structure is given by letting $\vd$ record the degrees of the vertices of~$G$ and letting all entries of~$\vr$ be~$1$. In this case the matrix $(\diag(\vd)-A)$ is the \emph{Lapacian matrix} of~$G$, and the corresponding arithmetical structure is known as the \emph{Laplacian arithmetical structure}. Thinking of arithmetical structures as generalizing this example, we use $L$ as a shorthand for the matrix $(\diag(\vd)-A)$ throughout this paper and denote its entries by $\ell_{i,j}$.

For any arithmetical structure $(\vd,\vr)$, Lorenzini \cite[Proposition~1.1]{L89} showed that the matrix~$L$ has rank $n-1$. Regarding $L$ as a $\mathbb{Z}$-linear transformation from $\mathbb{Z}^n$ to $\mathbb{Z}^n$, the cokernel $\mathbb{Z}^n/\im(L)$ is therefore of the form $\mathbb{Z}\oplus \mathcal{K}(G;\vd,\vr)$ for some finite abelian group $\mathcal{K}(G;\vd,\vr)$ known as the \emph{critical group} of $(G,\vd,\vr)$. In algebraic geometry, the critical group appears as the group of components of the N\'{e}ron model of the Jacobian of the generic curve. In the case of the Laplacian arithmetical structure, the critical group is also known as the \emph{sandpile group}, which has been of much interest in recent years; see, for example, \cite{CP,K19}. Critical groups of general arithmetical structures are understood in relatively few cases. Braun et al.~\cite{B18} showed that the critical group of any arithmetical structure on a path graph is the trivial group and that the critical group of any arithmetical structure on a cycle graph is cyclic. Archer et al.~\cite{A20} showed that the critical group of any arithmetical structure on a bident is cyclic. Lorenzini~\cite{L24} showed that every finite, connected graph admits an arithmetical structure with trivial critical group.

In \cite[Section~1.8]{L89}, Lorenzini defined a \emph{blowup} operation on graphs that in some cases would convert an arithmetical structure on a graph~$G$ into an arithmetical structure on a larger graph while preserving the critical group. Corrales and Valencia~\cite{C18} explicitly wrote down an instance of the blowup operation that converts a clique subgraph to a star subgraph. In the case of an arithmetical structure $(\vd,\vr)$ on~$G$, the inverse of the blowup operation is called the \emph{star-clique} operation and replaces a star subgraph of~$G$ with entry of~$\vd$ equal to~$1$ at the center by a clique subgraph to obtain an arithmetical structure on a graph~$G'$ with one fewer vertex. In \cite{A20}, \cite{B18}, and~\cite{GW}, a special case of the star-clique operation, known as \emph{smoothing}, was essential in enumerating arithmetical structures on path graphs, cycle graphs, and certain paths with a doubled edge, bounding the number of arithmetical structures on bidents, and classifying critical groups of arithmetical structures on cycle graphs and bidents. Keyes and Reiter~\cite{KR} defined an operation generalizing the star-clique operation that can be performed on any graph (not necessarily simple) and at any vertex (without requiring that the corresponding entry of~$\vd$ is~$1$). They used it to obtain an upper bound on the number of arithmetical structures on a graph that depends only on the number of vertices and number of edges of the graph.

Given an arithmetical structure $(\vd,\vr)$ on~$G$, let $(G',\vd',\vr')$ denote the result of applying the operation defined by Keyes and Reiter to $(G,\vd,\vr)$ at the vertex~$v_n$; by permuting the labels of the vertices, this operation can be performed at any vertex. We let $A'$ be the adjacency matrix of~$G'$ and use $L'$ as a shorthand for the matrix $(\diag(\vd')-A')$, denoting its entries by $\ell'_{i,j}$. In \cite[Remark~2.5]{KR}, Keyes and Reiter suggest studying the relationship between the critical groups $\mathcal{K}(G;\vd,\vr)$ and $\mathcal{K}(G';\vd',\vr')$, which we do by relating their invariant factors. Recall that the invariant factor decomposition of finite abelian groups allows one to write
\[
\mathcal{K}(G;\vd,\vr)\cong \bigoplus_{k=1}^{n-1} \mathbb{Z}/\alpha_{k}\mathbb{Z}\qquad \text { and } \qquad \mathcal{K}(G';\vd',\vr')\cong\bigoplus_{k=1}^{n-2}\mathbb{Z}/\alpha'_k\mathbb{Z},
\]
where $\alpha_k \mid \alpha_{k+1}$ for all $k \in [n-2]$ and $\alpha'_k\mid \alpha'_{k+1}$ for all $k\in[n-3]$. The \emph{invariant factors}~$\alpha_k$ (respectively, $\alpha'_k$) are uniquely determined up to sign, can be read off explicitly from the Smith normal form of~$L$ (resp.~$L'$), and are determined by greatest common divisors of appropriately sized minors of~$L$ (resp.~$L'$). In this paper, we choose the invariant factors to be positive. 

Our approach stems from Lemma~\ref{lem:Chio}, which relates minors of~$L'$ to minors of~$L$ that include the last row and the last column. Motivated by this connection, in Section~\ref{sec:minors} we prove several divisibility properties of minors of matrices with integer entries. The primary results of this section (Proposition~\ref{prop:Dk*dividesDk+1*}, Lemma~\ref{lem:Dk*upperbound}, and Proposition~\ref{prop:D1D2*dividesD1*D2}) do not hold at the level of determinantal ideals but rely on factorization properties of the integers. These results may be of independent interest to some readers.

Section~\ref{sec:mainresults} applies the results of Section~\ref{sec:minors} to the study of critical groups of arithmetical structures. Our first main result is Theorem~\ref{thm:DkL'}, which constrains the greatest common divisor of all $k\times k$ minors of~$L'$. As a corollary, we obtain the following restrictions on the order of $\mathcal{K}(G';\vd',\vr')$ in terms of $(G,\vd,\vr)$ and its critical group. Here, $g_n(L)$ denotes the greatest common divisor of the entries of row~$n$ of~$L$.

\begin{namedthm*}{Corollary~\ref{cor:orderbound}}
The following statements hold:
\begin{enumerate}[label=\textup{(\alph*)},ref=\textup{\alph*}]
\item $d_n^{n-3}\abs{\mathcal{K}(G;\vd,\vr)}$ divides $\abs{\mathcal{K}(G';\vd',\vr')}$,
\item $\abs{\mathcal{K}(G';\vd',\vr')}$ divides $(g_n(L))^2d_n^{n-3}\abs{\mathcal{K}(G;\vd,\vr)}$, and
\item $d_n^{n-3}\abs{\mathcal{K}(G;\vd,\vr)}\leq \abs{\mathcal{K}(G';\vd',\vr')}\leq (g_n(L))^2d_n^{n-3}\abs{\mathcal{K}(G;\vd,\vr)}$.
\end{enumerate}
\end{namedthm*}

Example~\ref{ex:nonsimple} shows that, when $g_n(L)\neq1$, the lower and upper bounds of Corollary~\ref{cor:orderbound}(\ref{part:orderbound}) are each sometimes achieved, so in this sense Corollary~\ref{cor:orderbound} is optimal.

Our next main results, Proposition~\ref{prop:alpha'1} and Theorem~\ref{thm:alpha'k}, constrain $\alpha'_k$ for all $k \in [n-2]$. In the special case where $g_n(L)=1$, and in particular when $G$ is a simple graph, these results yield the following corollary that shows $\mathcal{K}(G';\vd',\vr')$ is completely determined by $\mathcal{K}(G,\vd,\vr)$ and~$d_n$.

\begin{namedthm*}{Corollary~\ref{cor:gcd=1}}
If $g_n(L)=1$ \textup{(}for example, if $G$ is a simple graph\textup{)}, then $\alpha'_1=\alpha_2$ and $\alpha'_k=d_n\alpha_{k+1}$ for all $k\in \{2,3,\dotsc, n-2\}$. In particular,
\[
\mathcal{K}(G';\vd',\vr')\cong\mathbb{Z}/\alpha_{2}\mathbb{Z}\oplus \Biggl(\bigoplus_{k=2}^{n-2}\mathbb{Z}/d_n\alpha_{k+1}\mathbb{Z}\Biggr).
\]
\end{namedthm*}

When $d_n=1$, Corollary~\ref{cor:gcd=1} implies that the star-clique operation preserves the critical group, as was observed by Lorenzini \cite[Section~1.8]{L89}. When $g_n(L)\neq1$, the critical group $\mathcal{K}(G';\vd',\vr')$ is not completely determined by $\mathcal{K}(G;\vd,\vr)$ and~$d_n$, as shown by Example~\ref{ex:nonsimple}.

We conclude with Conjecture~\ref{conj:minors}, which is stated only in terms of minors of matrices. Showing this conjecture would allow improvement of the upper bound in Corollary~\ref{cor:orderbound} in the case where $(g_n(L))^2\alpha_1$ does not divide~$d_n$.

\section{Preliminaries}\label{sec:preliminaries}

In this section, we describe the operation defined by Keyes and Reiter in~\cite{KR}, recall some facts about Smith normal forms of matrices that will be useful in finding invariant factor decompositions of critical groups, and give examples of the operation and the computation of critical groups.

\subsection{The Keyes--Reiter generalized star-clique operation}\label{subsec:operation}
Given an arithmetical structure $(\vd,\vr)$ on a graph~$G$ with vertex set $V=\{v_1,v_2,\dotsc, v_n\}$, the operation defined by Keyes and Reiter~\cite{KR} can be applied to any vertex $v_i \in V$ to produce an arithmetical structure $(\vd',\vr')$ on a graph~$G'$ with vertex set $V\setminus\{v_i\}$. For simplicity, in this paper we relabel the vertices so that the operation is always performed at vertex~$v_n$. To describe the operation, we first construct the resulting graph~$G'$ and then construct the arithmetical structure $(\vd',\vr')$ on~$G'$.

Let $A$ be the adjacency matrix of~$G$, i.e.\ the matrix whose entries $\delta_{i,j}$ record the number of edges between $v_i$ and~$v_j$ in~$G$. Construction~2.1 of~\cite{KR} gives that the number of edges between $v_i$ and~$v_j$ in~$G'$ is $\delta_{i,j}d_n+\delta_{i,n}\delta_{n,j}$ for all $i,j\in[n-1]$ with $i\neq j$ and that $G'$ has no loops, so we have
\[
\delta'_{i,j}=\begin{cases}
\delta_{i,j}d_n+\delta_{i,n}\delta_{n,j}&\text{if }i\neq j\\
0&\text{if }i=j.
\end{cases}
\]
As in \cite[Remark~2.2]{KR}, the edges of~$G'$ may be thought of as arising through a two-step process. First, each edge of~$G$ not incident to~$v_n$ becomes $d_n$~edges with the same endpoints in~$G'$. Second, for any pair of vertices $v_i$ and~$v_j$ with $i,j\in[n-1]$ and $i\neq j$, we add $\delta_{i,n}\delta_{n,j}$ edges between $v_i$ and~$v_j$ in~$G'$. Figure~\ref{fig:example1} provides an example of the operation on a simple graph at the vertex~$v_7$ with $d_7=3$; the edges created in the first step are shown with solid lines and those created in the second step are shown with dashed lines. 

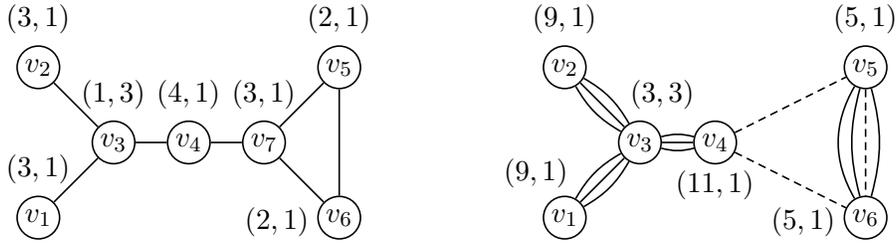
\begin{figure}
    \centering
    \begin{tikzpicture}
    \tikzstyle{every node}=[draw,circle,minimum size=4pt,inner sep=1.5pt]
    \draw [semithick] (1,1) node (v3)[label={[yshift=-0.2cm] $(1,3)$}] {$v_{3}$};
    \draw [semithick] (0,0) node (v1) [label={[yshift=-0.2cm] $(3,1)$}] {$v_{1}$} edge  (v3);
    \draw [semithick] (0,2) node (v2)[label={[yshift=-0.2cm] $(3,1)$}] {$v_{2}$} edge (v3);
    \draw [semithick] (2,1) node (v4)[label={[yshift=-0.2cm] $(4,1)$}] {$v_{4}$} edge (v3);
    \draw [semithick] (3,1) node (v7)[label={[yshift=-0.2cm] $(3,1)$}] {$v_{7}$} edge (v4);
    \draw [semithick] (4,2) node (v5)[label={[yshift=-0.2cm] $(2,1)$}] {$v_{5}$} edge (v7);
    \draw [semithick] (4,0) node (v6)[label={[yshift=-0.9cm,xshift=-.825cm] $(2,1)$}] {$v_{6}$} edge (v7);
    \draw [semithick] (v5) edge (v6);
    \end{tikzpicture}\qquad\qquad
    \begin{tikzpicture}
    \tikzstyle{every node}=[draw,circle,minimum size=4pt,inner sep=1.5pt]
    \draw [semithick] (0,0) node (v1) [label={[yshift=-0.25cm,xshift=-.38cm] $(9,1)$}] {$v_{1}$};
    \draw [semithick] (0,2) node (v2)[label={[yshift=-0.2cm] $(9,1)$}]{$v_{2}$} ;
    \draw [semithick] (2,1) node (v4)[label={[yshift=-1.5cm] $(11,1)$}] {$v_{4}$};
    \draw [semithick] (4,2) node (v5)[label={[yshift=-0.2cm] $(5,1)$}]{$v_{5}$};
    \draw [semithick] (4,0) node (v6)[label={[yshift=-0.9cm,xshift=-.825cm] $(5,1)$}]  {$v_{6}$};
    \draw [semithick] (1,1) node (v3)[label={[yshift=-0.2cm,xshift=.3cm] $(3,3)$}]{$v_{3}$};
    \draw [semithick] (v3) edge [bend left=15] (v1);
    \draw [semithick] (v3) edge[bend right=15] (v1);
    \draw [semithick] (v3) edge (v1);
    \draw [semithick] (v3) edge (v2);
    \draw [semithick] (v3) edge[bend left=15] (v2);
    \draw [semithick] (v3) edge[bend right=15] (v2);
    \draw [semithick] (v3) edge (v4);
    \draw [semithick] (v3) edge[bend left=15] (v4);
    \draw [semithick] (v3) edge[bend right=15] (v4);
    \draw [semithick] (v6) edge[bend left] (v5);
    \draw [semithick] (v6) edge[bend right=15] (v5);
    \draw [semithick] (v6) edge[bend left=15] (v5);
    \draw [densely dashed,semithick] (v5) edge (v6);
    \draw [densely dashed,semithick] (v4) edge (v5);
    \draw [densely dashed,semithick] (v4) edge (v6);
    \end{tikzpicture}
    \caption{On the left is a graph~$G$ with vertices labeled by $(d_i,r_i)$, the entries of an arithmetical structure $(\vd,\vr)$. Performing the operation at~$v_7$ produces the graph~$G'$ on the right with vertices labeled by entries of the arithmetical structure $(\vd',\vr')$.\label{fig:example1}}
\end{figure}

As in \cite[Lemma~2.3]{KR}, setting $d'_i=d_id_n-\delta_{i,n}\delta_{n,i}$ and $r'_i=r_i/\gcd_{j\in[n-1]}(r_j)$ for all $i\in[n-1]$ gives an arithmetical structure $(\vd',\vr')$ on~$G'$. Let $A'$ be the matrix with entries $\delta'_{i,j}$, that is, the adjacency matrix of~$G'$. We have that
\[
L=\diag(\vd)-A=\scriptsize{\begin{bmatrix}
d_1 & -\delta_{1,2} & \cdots & -\delta_{1,n}\\\
-\delta_{2,1} & d_2 & \ddots & \vdots\\
\vdots & \ddots & \ddots & -\delta_{n-1,n}\\
-\delta_{n,1} & \cdots & -\delta_{n,n-1} & d_n
\end{bmatrix}}
\] 
and
\[
L'=\diag(\vd')-A'=\scriptsize{\begin{bmatrix}
d'_1 & -\delta'_{1,2} & \cdots & -\delta'_{1,n-1}\\\
-\delta'_{2,1} & d'_2 & \ddots & \vdots\\
\vdots & \ddots & \ddots & -\delta'_{n-2,n-1}\\
-\delta'_{n-1,1} & \cdots & -\delta'_{n-1,n-2} & d'_{n-1}
\end{bmatrix}}.
\]
From the above discussion, $L'$ is equal to 
\[
\resizebox{\textwidth}{!}{
$\begin{bsmallmatrix}
d_{1}d_{n}-\delta _{1,n}\delta _{n,1}&-\delta _{1,2}d_{n}-\delta _{1,n}\delta _{n,2}&\text{\dots} &-\delta _{1,n-1}d_{n}-\delta _{1,n}\delta _{n,n-1}\\
-\delta _{2,1}d_{n}-\delta _{2,n}\delta _{n,1}&d_{2}d_{n}-\delta _{2,n}\delta _{n,2}&\scriptsize{\ddots} &\scriptsize{\vdots} \\
\scriptsize{\vdots} &\scriptsize{\ddots} &\scriptsize{\ddots} &-\delta _{n-2,n-1}d_{n}-\delta _{n-2,n}\delta _{n,n-1}\\
-\delta _{n-1,1}d_{n}-\delta _{n-1,n}\delta _{n,1}&\text{\dots} &-\delta _{n-1,n-2}d_{n}-\delta _{n-1,n}\delta _{n,n-2}&d_{n-1}d_{n}-\delta _{n-1,n}\delta _{n,n-1}
\end{bsmallmatrix}$.}
\]
Observe that the entries of the matrix~$L'$ are $2\times 2$ minors of~$L$ that include the last row and the last column. More specifically, if $(\ell_{i,j})_{i,j \in [n]}$ denote the entries of~$L$ and $(\ell'_{i,j})_{i,j \in[n-1]}$ denote the entries of~$L'$, then $\ell'_{i,j}=\ell_{i,j}\ell_{n,n}-\ell_{i,n}\ell_{n,j}$ for all $i,j \in [n-1]$.

Subsection \ref{subsec:examples} gives examples of the operation.

\subsection{Computing critical groups}\label{subsec:criticalgroups}
We next recall some facts about Smith normal forms of matrices and their connection to critical groups; see \cite{GK,L08,SNF} for more details.

Given an $m\times n$ matrix~$B$ with integer entries, there exists an $m\times m$ matrix~$S$ and an $n\times n$ matrix~$T$, both invertible and with integer entries, such that $SBT$ is of the form
\[
\scriptsize{\begin{bmatrix}
\alpha_1 & 0 & \cdots & \cdots & \cdots & \cdots & 0\\
0 & \alpha_2 & \ddots & & & & \vdots\\
\vdots & \ddots & \ddots & \ddots & & & \vdots\\
\vdots & & \ddots & \alpha_{t} & \ddots & & \vdots\\
\vdots & & & \ddots & 0 & \ddots & \vdots\\
\vdots & & & & \ddots & \ddots & 0\\
0 & \cdots & \cdots & \cdots & \cdots & 0 & 0
\end{bmatrix},}
\]
where $t=\rank(B)$, the $\alpha_k$ are positive integers, and $\alpha_k$ divides $\alpha_{k+1}$ for all $k\in[t-1]$. This diagonal matrix is the \emph{Smith normal form} of~$B$, and the entries~$\alpha_k$ are the \emph{invariant factors} of~$B$. Thinking of~$B$ as a linear map $\mathbb{Z}^n \to \mathbb{Z}^m$, the cokernel $\mathbb{Z}^m/\im(B)$ is isomorphic to 
\[
\mathbb{Z}^{m-t}\oplus\Biggl(\bigoplus_{k=1}^{t} \mathbb{Z}/\alpha_{k}\mathbb{Z}\Biggr).
\]
Letting $D_{k}(B)$ be the greatest common divisor of all $k\times k$ minors of~$B$ and defining $D_0(B)=1$, one has that $\alpha_{k}=D_{k}(B)/D_{k-1}(B)$ for all $k\in[t]$. The order of the torsion part of the cokernel $\mathbb{Z}^m/\im(B)$ is then
\[
\prod_{k=1}^{t} \alpha_{k}=\prod_{k=1}^{t}\frac{D_{k}(B)}{D_{k-1}(B)}=D_{t}(B).
\]

In the case of~$L$, the square matrix $(\diag(\vd)-A)$ associated to an arithmetical structure on a graph with $n$~vertices, Lorenzini \cite[Proposition~1.1]{L89} showed that $\rank(L)=n-1$. Hence, the critical group of $(G,\vd,\vr)$ is
\[
\mathcal{K}(G;\vd,\vr)\cong \bigoplus_{k=1}^{n-1} \mathbb{Z}/\alpha_{k}\mathbb{Z}
\]
and $\abs{\mathcal{K}(G;\vd,\vr)}=D_{n-1}(L)$. Performing the operation on $(G,\vd,\vr)$ at the vertex~$v_n$, one gets the arithmetical structure $(\vd',\vr')$ on~$G'$ with critical group
\[
\mathcal{K}(G';\vd',\vr')\cong \bigoplus_{k=1}^{n-2} \mathbb{Z}/\alpha'_{k}\mathbb{Z},
\]
where the~$\alpha'_k$ are the invariant factors of $L'=\diag{(\vd}')-A'$ and $A'$ is the adjacency matrix of~$G'$. The primary purpose of this paper is to study the relationship between the invariant factors $\{\alpha_k\}_{k=1}^{n-1}$ of~$L$ and the invariant factors $\{\alpha'_k\}_{k=1}^{n-2}$ of~$L'$. We obtain results relating these factors in Proposition~\ref{prop:alpha'1}, Theorem~\ref{thm:alpha'k}, and Corollary~\ref{cor:gcd=1}.

\subsection{Examples}\label{subsec:examples}
We end this section with examples of the Keyes--Reiter generalized star-clique operation and the computation of critical groups. The first example involves an arithmetical structure on a simple graph and illustrates the results of Corollary~\ref{cor:gcd=1}. 

\begin{example}\label{ex:examplesimple}
Consider the graph~$G$ on the left of Figure~\ref{fig:example1} with the arithmetical structure defined by $\vd=(3,3,1,4,2,2,3)$ and $\vr=(1,1,3,1,1,1,1)$. The Smith normal form of~$L$ is
\[
\resizebox{\textwidth}{!}{
$\begin{bsmallmatrix}
1 & 0 & 0 & 0 & 0 & 0 & 0\\
0 & 1 & 0 & 0 & 0 & 0 & 0\\
0 & 0 & 1 & 0 & 0 & 0 & 0\\
0 & 0 & 0 & 1 & 0 & 0 & 0\\
0 & 0 & 0 & 0 & 3 & 0 & 0\\
0 & 0 & 0 & 0 & 0 & 3 & 0\\
0 & 0 & 0 & 0 & 0 & 0 & 0
\end{bsmallmatrix}
=\begin{bsmallmatrix}
0 & 0 & 1 & 0 & 0 & 0 & 0\\
0 & 0 & 0 & 1 & 0 & 0 & 0\\ 
0 & 0 & 0 & 0 & 0 & 0 & 1\\ 
0 & 1 & 0 & 0 & 0 & 0 & 0\\ 
0 & -2 & 0 & -1 & 0 & 1 & -1\\ 
0 & -2 & 0 & -1 & 1 & 0 & -1\\ 
-1 & -1 & -3 & -1 & -1 & -1 & -1
\end{bsmallmatrix}
\begin{bsmallmatrix}
3 & 0 & -1 & 0 & 0 & 0 & 0\\
0 & 3 & -1 & 0 & 0 & 0 & 0\\ 
-1 & -1 & 1 & -1 & 0 & 0 & 0\\ 
0 & 0 & -1 & 4 & 0 & 0 & -1\\ 
0 & 0 & 0 & 0 & 2 & -1 & -1\\ 
0 & 0 & 0 & 0 & -1 & 2 & -1\\ 
0 & 0 & 0 & -1 & -1 & -1 & 3
\end{bsmallmatrix}
\begin{bsmallmatrix}
-1 & -4 & -5 &-7 & -5 & -5 & 1\\
 0 & -3 &-4 & -5 & -4 & -4 &1\\
 0 & -9 & -12 &-16 &-12 &-12 & 3\\
 0 & -2 &-3 & -4 & -3 & -3 & 1\\
 0 & 1 & 1 &2 & 1 & 2 &1\\
 0 & 1 & 1 &2 & 2 & 1 &1\\
 0 & 0 & 0 &0 & 0 & 0 & 1
\end{bsmallmatrix}$.\label{eq:before}}
\]
Thus $\mathcal{K}(G;\vd,\vr)\cong\mathbb{Z}/3\mathbb{Z}\oplus\mathbb{Z}/3\mathbb{Z}$. Performing the operation at the vertex~$v_7$, we obtain the graph~$G'$ on the right of Figure~\ref{fig:example1} with the arithmetical structure defined by $\vd'=(9,9,3,11,5,5)$ and $\vr'=(1,1,3,1,1,1)$. The matrix~$L'$ corresponding to $(G',\vd',\vr')$ has Smith normal form
\[
\resizebox{\textwidth}{!}{
$\begin{bsmallmatrix}
1 & 0 & 0 & 0 & 0 & 0\\
0 & 3 & 0 & 0 & 0 & 0\\
0 & 0 & 3 & 0 & 0 & 0\\
0 & 0 & 0 & 9 & 0 & 0\\
0 & 0 & 0 & 0 & 9 & 0\\
0 & 0 & 0 & 0 & 0 & 0 
\end{bsmallmatrix}
=\begin{bsmallmatrix}
0 & 0 & 0 & 1 & 0 & 0\\
0 & 0 & 1 & 0 & 0 & 0\\
0 & 1 & 0 & 0 & 0 & 0\\
0 & -2 & 0 & -4 & 0 & 1\\
0 & -2 & 0 & -4 & 1 & 0\\
-1 & -1 & -3 & -1 & -1 & -1
\end{bsmallmatrix} 
\begin{bsmallmatrix}
9 & 0 & -3 & 0 & 0 & 0\\ 
0 & 9 & -3 & 0 & 0 & 0\\ 
-3 & -3 & 3 & -3 & 0 & 0\\ 
0 & 0 & -3 & 11 & -1 & -1\\ 
0 & 0 & 0 & -1 & 5 & -4\\
0 & 0 & 0 & -1 & -4 & 5
\end{bsmallmatrix}
\begin{bsmallmatrix}
-6 & -1 & -9 & -7 & -6 & 1\\
-5 & 0 & -7 & -6 & -5 & 1\\
-15 & 0 & -22 & -18 & -15 & 3\\
-4 & 0 & -6 & -5 & -4 & 1\\
0 & 0 & 0 & -1 & 1 & 1\\
0 & 0 & 0 & 0 & 0 & 1
\end{bsmallmatrix}$.\label{eq:after}}
\]
Thus $\mathcal{K}(G';\vd',\vr')\cong\mathbb{Z}/3\mathbb{Z}\oplus \mathbb{Z}/3\mathbb{Z}\oplus\mathbb{Z}/9\mathbb{Z}\oplus\mathbb{Z}/9\mathbb{Z}$. This is consistent with Corollary~\ref{cor:gcd=1}, which guarantees that, for simple graphs, if $(\alpha_1,\alpha_2,\dotsc,\alpha_{n-1},0)$ are the diagonal entries of the Smith normal form of~$L$ then $(\alpha_2,d_n\alpha_3,\dotsc,d_n\alpha_{n-1},0)$ are the diagonal entries in the Smith normal form of~$L'$; this is the case here with $d_n=d_7=3$. 
\end{example}

The case of non-simple graphs is more intriguing. The next example gives two distinct arithmetical structures on the same non-simple graph with the same critical group and the same value of~$d_n$ but with different critical groups after performing the operation, thus showing that $\mathcal{K}(G';\vd',\vr')$ is not completely determined by $\mathcal{K}(G;\vd,\vr)$ and~$d_n$. It also shows that each of the bounds of Corollary~\ref{cor:orderbound}(\ref{part:orderbound}) can be achieved when $g_n(L)\neq1$.

\begin{example}\label{ex:nonsimple}
Consider the graph~$G$ on the left of Figure~\ref{fig:example2a} with the arithmetical structure defined by $\vd=(8,10,4,8)$ and $\vr=(1,3,5,2)$. The matrix~$L$ has Smith normal form
\[
\begin{bsmallmatrix}
1 & 0 & 0 & 0\\
0 & 1 & 0 & 0\\
0 & 0 & 24 & 0\\
0 & 0 & 0 & 0
\end{bsmallmatrix}
=\begin{bsmallmatrix}
1 & 3 & 6 & 2\\
1 & 4 & 4 & -2\\
0 & 2 & -2 & -9\\
1 & 3 & 5 & 2
\end{bsmallmatrix}
\begin{bsmallmatrix}
8 & -1 & -1 & 0\\
-1 & 10 & -5 & -2\\ 
-1 & -5 & 4 & -2\\ 
0 & -2 & -2 & 8 
\end{bsmallmatrix}
\begin{bsmallmatrix}
-1 & -4 & 43 & 1\\
0 & 0 & -1 & 3\\
0 & -1 & 9 & 5\\
0 & 0 & -1 & 2
\end{bsmallmatrix},
\] 
so $\mathcal{K}(G;\vd,\vr)\cong\mathbb{Z}/24\mathbb{Z}$. Performing the operation at the vertex~$v_4$, we get an arithmetical structure defined by $\vd'=(64,76,28)$ and $\vr'=(1,3,5)$ on the graph on the right of Figure~\ref{fig:example2a}, where the label on an edge indicates the multiplicity of edges between the corresponding vertices. The matrix~$L'$ has Smith normal form
\[
\begin{bsmallmatrix}
4 & 0 & 0\\
0 & 48 & 0\\
0 & 0 & 0
\end{bsmallmatrix}
=\begin{bsmallmatrix}
1 & 3 & 6\\
1 & 4 & -2\\
1 & 3 & 5
\end{bsmallmatrix}
\begin{bsmallmatrix}
64 & -8 & -8\\
-8 & 76 & -44\\ 
-8 & -44 & 28
\end{bsmallmatrix}
\begin{bsmallmatrix}
-2 & -2 & 1\\
-1 & 1 & 3\\
-2 & 1 & 5
\end{bsmallmatrix},
\] 
so $\mathcal{K}(G';\vd',\vr')\cong\mathbb{Z}/4\mathbb{Z}\oplus\mathbb{Z}/48\mathbb{Z}$. This achieves the lower bound of Corollary~\ref{cor:orderbound}(\ref{part:orderbound}), as 
\[
d_4\cdot \abs{\mathcal{K}(G;\vd,\vr)}=8\cdot 24=192=\abs{\mathcal{K}(G';\vd',\vr')}.
\]

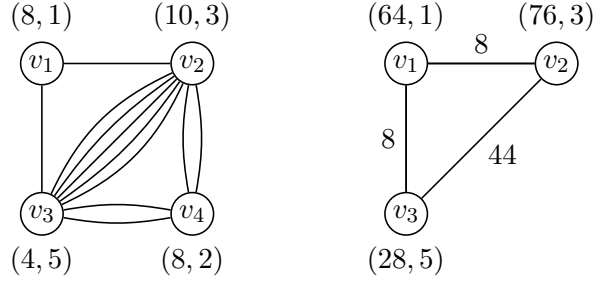
\begin{figure}
    \centering
    \begin{tikzpicture}
    \tikzstyle{style1}=[draw,circle,minimum size=4pt,inner sep=1.5pt]
    \draw [semithick] (2,0) node[style1] (v4)[label={below: $(8,2)$}] {$v_{4}$};
    \draw [semithick] (2,2) node[style1] (v2) [label={above: $(10,3)$}] {$v_{2}$};
    \draw [semithick] (0,2) node[style1] (v1) [label={above: $(8,1)$}] {$v_{1}$} edge (v2);
    \draw [semithick] (0,0) node[style1] (v3) [label={below: $(4,5)$}] {$v_{3}$} edge (v1);
    \draw [semithick] (v2) edge[bend right=10] (v3);
    \draw [semithick] (v2) edge[bend left=20] (v3);
    \draw [semithick] (v2) edge[bend right=20] (v3);
    \draw [semithick] (v2) edge[bend left=10] (v3);
    \draw [semithick] (v2) edge (v3);
    \draw [semithick] (v3) edge[bend right=10] (v4);
    \draw [semithick] (v3) edge[bend left=10] (v4);
    \draw [semithick] (v2) edge[bend right=10] (v4);
    \draw [semithick] (v2) edge[bend left=10] (v4);
    \end{tikzpicture}\qquad \qquad
    \begin{tikzpicture}
    \tikzstyle{style1}=[draw,circle,minimum size=4pt,inner sep=1.5pt]
    \draw [semithick] (2,2) node[style1] (v2) [label={above: $(76,3)$}] {$v_{2}$};
    \draw [semithick] (0,2) node[style1] (v1) [label={above: $(64,1)$}] {$v_{1}$} edge (v2);
    \draw [semithick] (0,0) node[style1] (v3) [label={below: $(28,5)$}] {$v_{3}$} edge (v1);
    \draw [semithick] (v2) edge node[below right=-0.05cm] {44} (v3);
    \draw [semithick] (v1) edge node[above] {8} (v2);
     \draw [semithick] (v1) edge node[left] {8} (v3);
    \end{tikzpicture}
    \caption{An arithmetical structure on a non-simple graph labeled by the entries of $(\vd,\vr)$ and the result of performing the operation at the vertex~$v_4$.}\label{fig:example2a}
\end{figure}
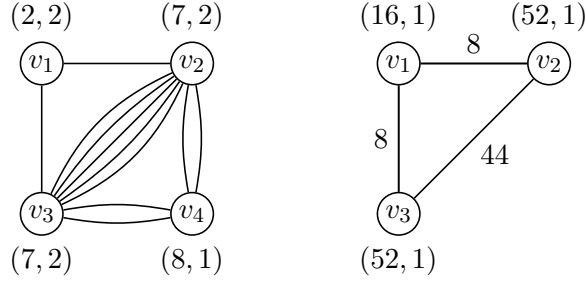
\begin{figure}
    \centering
    \begin{tikzpicture}
    \tikzstyle{style1}=[draw,circle,minimum size=4pt,inner sep=1.5pt]
    \draw [semithick] (2,0) node[style1] (v4)[label={below: $(8,1)$}] {$v_{4}$};
    \draw [semithick] (2,2) node[style1] (v2) [label={above: $(7,2)$}] {$v_{2}$};
    \draw [semithick] (0,2) node[style1] (v1) [label={above: $(2,2)$}] {$v_{1}$} edge (v2);
    \draw [semithick] (0,0) node[style1] (v3) [label={below: $(7,2)$}] {$v_{3}$} edge (v1);
    \draw [semithick] (v2) edge[bend right=10] (v3);
    \draw [semithick] (v2) edge[bend left=20] (v3);
    \draw [semithick] (v2) edge[bend right=20] (v3);
    \draw [semithick] (v2) edge[bend left=10] (v3);
    \draw [semithick] (v2) edge (v3);
    \draw [semithick] (v3) edge[bend right=10] (v4);
    \draw [semithick] (v3) edge[bend left=10] (v4);
    \draw [semithick] (v2) edge[bend right=10] (v4);
    \draw [semithick] (v2) edge[bend left=10] (v4);
    \end{tikzpicture}\qquad \qquad
    \begin{tikzpicture}
    \tikzstyle{style1}=[draw,circle,minimum size=4pt,inner sep=1.5pt]
    \draw [semithick] (2,2) node[style1] (v2) [label={above: $(52,1)$}] {$v_{2}$};
    \draw [semithick] (0,2) node[style1] (v1) [label={above: $(16,1)$}] {$v_{1}$} edge (v2);
    \draw [semithick] (0,0) node[style1] (v3) [label={below: $(52,1)$}] {$v_{3}$} edge (v1);
    \draw [semithick] (v2) edge node[below right=-0.05cm] {44} (v3);
    \draw [semithick] (v1) edge node[above] {8} (v2);
     \draw [semithick] (v1) edge node[left] {8} (v3);
    \end{tikzpicture}
    \caption{Another arithmetical structure on the same graph as in Figure~\ref{fig:example2a} and with the same value of~$d_4$ and the result of performing the operation at the vertex~$v_4$.}\label{fig:example2b} 
\end{figure}

For the same graph~$G$, consider the arithmetical structure given by $\vd=(2,7,7,8)$ and $\vr=(2,2,2,1)$, as shown on the left of Figure~\ref{fig:example2b}. The matrix~$L$ has Smith normal form
\[
\begin{bsmallmatrix}
1 & 0 & 0 & 0\\
0 & 1 & 0 & 0\\
0 & 0 & 24 & 0\\
0 & 0 & 0 & 0
\end{bsmallmatrix}
=\begin{bsmallmatrix}
0 & 0 & 1 & 0\\
1 & 1 & 1 & 1\\
0 & 1 & -1 & -6\\
2 & 2 & 2 & 1
\end{bsmallmatrix}
\begin{bsmallmatrix}
2 & -1 & -1 & 0\\
-1 & 7 & -5 & -2\\ 
-1 & -5 & 7 & -2\\ 
0 & -2 & -2 & 8
\end{bsmallmatrix}
\begin{bsmallmatrix}
-1 & -7 & -12 & 2\\
0 & 0 & 1 & 2\\
0 & -1 & -1 & 2\\
0 & 0 & 0 & 1
\end{bsmallmatrix},
\]
so $\mathcal{K}(G;\vd,\vr)\cong\mathbb{Z}/24\mathbb{Z}$. Performing the operation at the vertex~$v_4$, we get the arithmetical structure defined by $\vd'=(16,52,52)$ and $\vr'=(1,1,1)$ on the graph on the right of Figure~\ref{fig:example2b}. The matrix~$L'$ has Smith normal form
\[
\begin{bsmallmatrix}
4 & 0 & 0\\
0 & 192 & 0\\
0 & 0 & 0
\end{bsmallmatrix}
=\begin{bsmallmatrix}
-1 & -1 & 0\\
25 & 26 & 0\\
1 & 1 & 1
\end{bsmallmatrix}
\begin{bsmallmatrix}
16& -8& -8\\
-8& 52& -44\\ 
-8& -44& 52
\end{bsmallmatrix}
\begin{bsmallmatrix}
-6 & -11 & 1\\
1 & 2 & 1\\
0 & 0 & 1
\end{bsmallmatrix},
\]
so $\mathcal{K}(G';\vd',\vr')\cong\mathbb{Z}/4\mathbb{Z}\oplus \mathbb{Z}/192\mathbb{Z}$. This achieves the upper bound of Corollary~\ref{cor:orderbound}(\ref{part:orderbound}), as
\[
d_4\cdot (g_n(L))^2\cdot \abs{\mathcal{K}(G;\vd,\vr)}=8\cdot 2^2\cdot 24=768=\abs{\mathcal{K}(G';\vd',\vr')}.
\]
\end{example}

\section{Divisibility properties of minors of matrices}\label{sec:minors}

In this section, we prove several divisibility properties of minors of matrices with integer entries that will be applied to matrices of the form $(\diag(\vd)-A)$ in Section~\ref{sec:mainresults}. This section does not use arithmetical structures or the operation discussed in Section~\ref{sec:preliminaries}, and it may thus be read independently from the rest of the paper.

Given an $m\times n$ matrix~$B$ and subsets $I\subseteq[m]$ and $J\subseteq[n]$ with $\abs{I}=\abs{J}=k$, let $B_{I,J}$ be the associated \emph{minor}, i.e.\ the determinant of the $k\times k$ submatrix of~$B$ consisting of entries whose row index is in~$I$ and whose column index is in~$J$. Slightly abusing notation, when $m=n$ we use $B_{i,j}$ as a shorthand for $B_{[n]\setminus\{i\},[n]\setminus\{j\}}$.

As defined in Section~\ref{sec:preliminaries}, for $k\in[\min\{m,n\}]\cup\{0\}$, let $D_k(B)$ be the greatest common divisor of all $k\times k$ minors of~$B$, i.e.
\[
D_k(B)=\gcd\{B_{I,J}:I\subseteq[m],J\subseteq[n],\abs{I}=\abs{J}=k\}.
\]
We define $D_0(B)=1$. For $k\in[\min\{m,n\}]$, let $D_k^*(B)$ be the greatest common divisor of the determinants of all $k\times k$ submatrices of~$B$ that include the last row and the last column, i.e.
\[
D_k^*(B)=\gcd\{B_{I,J}:I\subseteq[m],J\subseteq[n],\abs{I}=\abs{J}=k,m\in I,n\in J\}.
\]
Analogues of all results in this section would hold if we instead defined $D_k^*(B)$ by requiring submatrices to include any specified row and any specified column, but the notion of $D_k^*(B)$ given here is what is of use in Section~\ref{sec:mainresults}.

We first collect some elementary facts about $D_k(B)$ and $D^*_k(B)$.
\begin{proposition}\label{prop:minorfacts}
Let $B$ be an $m\times n$ matrix with integer entries. The following statements hold: 
\begin{enumerate}[label=\textup{(\alph*)},ref=\textup{\alph*}]
    \item $D_k(B)$ divides $D_k^*(B)$ for all $k\in[\min\{m,n\}]$, \label{part:DkdividesDk*}
    \item if $B'$ is an $m'\times n'$ submatrix of~$B$, then $D_k(B)$ divides $D_k(B')$ for all $k\in[\min\{m',n'\}]\cup\{0\}$,\label{part:DkBdividesDkB'}
    \item if $B'$ is an $m'\times n'$ submatrix of~$B$ that contains the last row and the last column, then $D_k^*(B)$ divides $D_k^*(B')$ for all $k\in[\min\{m',n'\}]$,  \label{part:Dk*BdividesDk*B'}
    \item $D_n(B)=D_n^*(B)=\det(B)$ whenever $m=n$, and \label{part:DnB=Dn*B=detB}
    \item $D_k(B)$ divides $D_{k+1}(B)$ for all $k\in[\min\{m,n\}-1]\cup\{0\}$.\label{part:DkdividesDk+1}
\end{enumerate}
\end{proposition}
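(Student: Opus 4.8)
The plan is to reduce parts (a)--(d) to one elementary observation about greatest common divisors, and to handle part (e) by a single cofactor expansion. The guiding principle I would record first is this: if $S\subseteq T$ are finite sets of integers, then $\gcd(T)$ divides $\gcd(S)$. Indeed, $\gcd(T)$ divides every element of $T$, hence every element of the smaller set $S$, so it is a common divisor of $S$ and therefore divides $\gcd(S)$; this remains valid when some of the minors vanish, since $\gcd$ is just the nonnegative generator of the ideal that the given integers generate in $\mathbb{Z}$. Each of the first three parts then amounts to exhibiting one set of minors as a subset of another.

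For part~(a), the minors defining $D_k^*(B)$ --- those with $m\in I$ and $n\in J$ --- form a subset of all $k\times k$ minors, which define $D_k(B)$, so the principle gives $D_k(B)\mid D_k^*(B)$. For part~(b), every $k\times k$ minor of a submatrix $B'$ is itself a $k\times k$ minor of $B$ (its row and column index sets, read inside $B$, select the same entries), so the minors of $B'$ form a subset of those of $B$ and $D_k(B)\mid D_k(B')$; the case $k=0$ holds since $D_0=1$ by convention. For part~(c), because $B'$ contains the last row and last column of $B$ and a submatrix preserves the order of rows and columns, the last row and last column of $B'$ are exactly those of $B$; hence any $k\times k$ minor of $B'$ that uses its last row and column is such a minor of $B$, and the same subset argument yields $D_k^*(B)\mid D_k^*(B')$. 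Part~(d) is immediate: when $m=n$ the only $n\times n$ minor is $\det(B)$ (taking $I=J=[n]$), and since $[n]$ contains $n$ this minor also includes the last row and last column, so $D_n(B)=D_n^*(B)=\det(B)$.

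The only part that needs more than a subset comparison is part~(e), which is the step I would single out. Here I would fix an arbitrary $(k+1)\times(k+1)$ submatrix of $B$ and expand its determinant along any one of its rows by cofactors, which writes that $(k+1)\times(k+1)$ minor as an integer linear combination of $k\times k$ minors of $B$. Since $D_k(B)$ divides each of those $k\times k$ minors, it divides the linear combination, i.e.\ it divides the chosen $(k+1)\times(k+1)$ minor. As this holds for every $(k+1)\times(k+1)$ minor, $D_k(B)$ divides their gcd $D_{k+1}(B)$, and the case $k=0$ is trivial since $D_0(B)=1$.

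I do not expect a genuine obstacle: the entire content is the bookkeeping of subset relations among index sets together with one application of Laplace expansion. The only points requiring care are matching the index conventions in part~(c) (confirming that the last row and column of the submatrix are those of $B$) and checking that the divisibility statements behave correctly in the presence of vanishing minors, which the ideal-theoretic description of $\gcd$ handles cleanly.
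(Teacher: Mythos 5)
Your proof is correct and takes essentially the same approach as the paper's: parts (a)--(d) via the observation that one collection of minors is a subset of another (so the gcd of the larger collection divides the gcd of the smaller), and part (e) by cofactor expansion of an arbitrary $(k+1)\times(k+1)$ submatrix along a row. The only cosmetic difference is that you phrase the final step of (e) via the ideal-theoretic description of the gcd where the paper invokes B\'{e}zout's identity; these are the same argument.
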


\begin{proof}
Part~(\ref{part:DkdividesDk*}) follows from the fact that the $k\times k$ submatrices of~$B$ that include the last row and last column form a subset of the full set of $k\times k$ submatrices of~$B$. Part~(\ref{part:DkBdividesDkB'}) follows from the fact that the $k\times k$ submatrices of~$B'$ form a subset of the $k\times k$ submatrices of~$B$. Similarly, for part~(\ref{part:Dk*BdividesDk*B'}), the $k\times k$ submatrices of~$B'$ that include the last row and last column form a subset of the $k\times k$ submatrices of~$B$ that include the last row and last column. For part~(\ref{part:DnB=Dn*B=detB}), when $m=n$ there is only a single $n\times n$ submatrix of~$B$, namely the full matrix~$B$, and this includes the last row and the last column.

For part~(\ref{part:DkdividesDk+1}), let $B'$ be an arbitrary $(k+1)\times(k+1)$ submatrix of~$B$. Expanding along row~$1$, we have
\[
\det(B')=\sum_{j=1}^{k+1}(-1)^{j+1}b'_{1,j}B'_{1,j}.
\]
As $D_k(B)$ divides $B'_{1,j}$ for all $j\in[k+1]$, it also divides $\det(B')$. Since $B'$ is arbitrary, B\'{e}zout's identity then gives that $D_k(B)$ divides $D_{k+1}(B)$.
\end{proof}

\begin{remark}
Proposition~\ref{prop:minorfacts}(\ref{part:DkdividesDk+1}) also follows from the existence of the Smith normal form of~$B$, which guarantees that $\alpha_{k+1}=D_{k+1}(B)/D_k(B)$ is an integer for all $k\in[\min\{m,n\}-1]\cup\{0\}$. We include the elementary proof given above because a similar approach is used in other proofs in this section.
\end{remark}

Before stating the primary results of this section (Proposition~\ref{prop:Dk*dividesDk+1*}, Lemma~\ref{lem:Dk*upperbound}, and Proposition~\ref{prop:D1D2*dividesD1*D2}), we remark that the corresponding results do not hold for determinantal ideals in a polynomial ring where the entries of the matrix are distinct indeterminates, as we have checked using \texttt{SageMath}~\cite{Sage}. Our proofs therefore rely not only on determinantal identities but also on factorization properties of the integers.

We first recall the Desnanot--Jacobi identity \cite{Jacobi}, which says that for an $n\times n$ matrix~$B$ and $i_1,i_2,j_1,j_2\in[n]$ with $i_1<i_2$ and $j_1<j_2$, we have
\[
B_{I\setminus\{i_1,i_2\},J\setminus\{j_1,j_2\}}\det(B)=B_{i_1,j_1}B_{i_2,j_2}-B_{i_1,j_2}B_{i_2,j_1}.
\]
For a proof and historical remarks on the Desnanot--Jacobi identity, we refer the reader to \cite[Theorem~3.12]{Bressoud}. 

\begin{proposition}\label{prop:Dk*dividesDk+1*}
If $B$ is an $m\times n$ matrix with integer entries, then $D_k^*(B)$ divides $D_{k+1}^*(B)$ for all $k\in\{2,3,\dotsc,\min\{m,n\}-1\}$.
\end{proposition}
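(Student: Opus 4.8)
The plan is to reduce the statement to a single-matrix assertion and then prove that by normalizing a principal submatrix. First I would observe that, since $D_{k+1}^*(B)$ is the gcd of the relevant minors, it suffices (as in the proof of Proposition~\ref{prop:minorfacts}(\ref{part:DkdividesDk+1})) to show that $D_k^*(B)$ divides $\det(B')$ for every $(k+1)\times(k+1)$ submatrix $B'$ of~$B$ containing the last row and the last column. Every $k\times k$ submatrix of~$B'$ through its last row and last column is also such a submatrix of~$B$, so $D_k^*(B)$ divides each of them, hence divides the gcd~$g$ of the $k\times k$ minors of~$B'$ through its last row and last column. Thus the proposition follows from the local statement: \emph{for any $(k+1)\times(k+1)$ integer matrix~$M$ with $k\ge2$, the gcd~$g$ of the $k\times k$ minors of~$M$ containing the last row and last column divides $\det(M)$.} I would prove this one prime at a time: fixing a prime~$p$ and setting $a=v_p(g)=\min_{i,j\in[k]}v_p(M_{i,j})$, where $v_p$ denotes $p$-adic valuation, the goal becomes $v_p(\det M)\ge a$.

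Next I would normalize. Row operations among the first $k$ rows and column operations among the first $k$ columns fix $\det(M)$, and a single such elementary operation acts on the array $(M_{i,j})_{i,j\in[k]}$ of minors through the last row and column by an elementary operation of the same type; hence they preserve~$a$ as well. Using them I would put the principal $k\times k$ block into Smith normal form, so that
\[
M=\begin{pmatrix}\diag(d_1,\dots,d_k) & w\\ u^\top & t\end{pmatrix}.
\]
A direct determinant computation then gives $M_{i,j}=\pm u_iw_j\prod_{m\ne i,j}d_m$ for $i\ne j$ and, crucially, the relation
\[
\det(M)=d_iM_{i,i}-u_iw_i\prod_{m\ne i}d_m\qquad\text{for every }i\in[k].
\]

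Writing $e_m=v_p(d_m)$, this relation gives $v_p(\det M)\ge\min\bigl(e_i+v_p(M_{i,i}),\,v_p(u_iw_i)+\sum_{m\ne i}e_m\bigr)$, and the first argument is automatically $\ge a$ since $v_p(M_{i,i})\ge a$ and $e_i\ge0$. The whole difficulty is therefore to exhibit one index~$i$ for which the leftover monomial term $u_iw_i\prod_{m\ne i}d_m$ is also divisible by~$p^a$. I expect this to be the main obstacle, because that term can individually be far less $p$-divisible than~$p^a$; what saves it is the full family of off-diagonal constraints $v_p(u_i)+v_p(w_j)+\sum_{m\ne i,j}e_m\ge a$ for all $i\ne j$. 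Summing these $k(k-1)$ inequalities and comparing with $\sum_i\bigl(v_p(u_iw_i)+\sum_{m\ne i}e_m\bigr)$ forces some index~$i$ to work; this averaging needs a pair $i\ne j$, so it is exactly where the hypothesis $k\ge2$ enters (and it must, since the statement is false for $k=1$). Finally I would dispose of the degenerate cases where the Smith normal form has a zero on the diagonal: if the principal block has rank at most $k-2$ then $\det(M)=0$, while if it has rank $k-1$ then the relation collapses to $\det(M)=\pm d_iM_{i,i}$ for a suitable~$i$, giving $v_p(\det M)\ge v_p(M_{i,i})\ge a$ immediately.
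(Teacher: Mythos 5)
Your proof is correct, and it takes a genuinely different route from the paper's. The paper argues by induction on $k$, working one prime at a time with the Desnanot--Jacobi identity: the base case $k=2$ combines cofactor expansions with that identity, and the inductive step uses the already-established divisibility $D_{k-1}^*(B)\mid D_k^*(B)$ to locate a $(k-1)\times(k-1)$ minor of controlled $p$-adic valuation by which to divide. You instead reduce to the ``local'' statement for a single $(k+1)\times(k+1)$ matrix $M$, put the leading $k\times k$ block into Smith normal form---correctly observing that $GL_k(\mathbb{Z})$-operations on the first $k$ rows and columns induce operations of the same kind on the array of complementary minors $(M_{i,j})_{i,j\in[k]}$ and hence preserve both $\det M$ and the gcd $g$ of those minors---and finish with explicit formulas and a valuation inequality. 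The averaging step you flag as the crux does close: summing the $k(k-1)$ constraints $v_p(u_i)+v_p(w_j)+\sum_{m\neq i,j}e_m\ge a$ over ordered pairs $i\neq j$ gives $\sum_i\bigl(v_p(u_iw_i)+\sum_{m\neq i}e_m\bigr)\ge ka+\sum_m e_m\ge ka$, so some index works (alternatively, compare the minimizers of $v_p(u_i)-e_i$ and $v_p(w_j)-e_j$). Your treatment of the rank-deficient cases is right, and the one case you leave implicit---some $u_i$ or $w_i$ equal to zero with all $d_m\neq0$---is harmless since the corresponding term vanishes. Both arguments are prime-by-prime, consistent with the paper's remark that the statement fails at the level of determinantal ideals; yours avoids induction and condensation identities entirely and makes transparent exactly where the hypothesis $k\ge2$ enters, while the paper's is shorter once the Desnanot--Jacobi identity is in hand.
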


\begin{proof}
We proceed by induction on~$k$. The base case is $k=2$. Let $B'$ be an arbitrary $3\times3$ submatrix of~$B$ that includes the last row and the last column. Let $p_1^{a_1}p_2^{a_2}\dotsm p_q^{a_q}$ be the prime factorization of $D_2^*(B)$. Fix $\ell\in[q]$. 

If $p_\ell^{a_\ell}$ divides $b'_{3,3}$, we write $\det(B')$ as its expansion along row~$1$ plus its expansion along row~$2$ minus its expansion along row~$3$ to get
\[
\det(B')=b'_{1,1}B'_{1,1}-b'_{1,2}B'_{1,2}-b'_{2,1}B'_{2,1}+b'_{2,2}B'_{2,2}-b'_{3,3}B'_{3,3}.
\]
Since $p_\ell^{a_\ell}$ divides $B'_{1,1}$, $B'_{1,2}$, $B'_{2,1}$, $B'_{2,2}$, and $b'_{3,3}$, it therefore also divides $\det(B')$. If $p_\ell^{a_\ell}$ does not divide $b'_{3,3}$, we use the Desnanot--Jacobi identity to write $b'_{3,3}\det(B')=B'_{1,1}B'_{2,2}-B'_{1,2}B'_{2,1}$. As each of $B'_{1,1}$, $B'_{2,2}$, $B'_{1,2}$, and $B'_{2,1}$ is divisible by $p_\ell^{a_\ell}$, we have that $b'_{3,3}\det(B')$ is divisible by $p_\ell^{2a_\ell}$. Since $b'_{3,3}$ is not divisible by $p_\ell^{a_\ell}$, it follows that $\det(B')$ is divisible by $p_\ell^{a_\ell}$. 

Running this argument for all~$\ell$, we have that $D_2^*(B)$ divides $\det(B')$. Since $B'$ is arbitrary, B\'{e}zout's identity gives that $D_2^*(B)$ divides $D_3^*(B)$. This completes the base case. 

Now fix $k\geq3$. Let $B''$ be an arbitrary $(k+1)\times(k+1)$ submatrix of~$B$ that includes the last row and the last column. Let $p_1^{c_1}p_2^{c_2}\dotsm p_q^{c_q}$ be the prime factorization of $D_k^*(B'')$. Fix $\ell\in[q]$.

By induction, $D_{k-1}^*(B'')$ divides $D_k^*(B'')$, and therefore there exist $i_1,i_2,j_1,j_2\in[k]$ with $i_1<i_2$ and $j_1<j_2$ such that $B''_{[k+1]\setminus\{i_1,i_2\},[k+1]\setminus\{j_1,j_2\}}$ is not divisible by $p_\ell^{c_\ell+1}$.
The Desnanot--Jacobi identity gives 
\[
B''_{[k+1]\setminus\{i_1,i_2\},[k+1]\setminus\{j_1,j_2\}}\det(B'')=B''_{i_1,j_1}B''_{i_2,j_2}-B''_{i_1,j_2}B''_{i_2,j_1}.
\]
As each of $B''_{i_1,j_1}$, $B''_{i_2,j_2}$, $B''_{i_1,j_2}$, and $B''_{i_2,j_1}$ is divisible by $p_\ell^{c_\ell}$, we have that 
\[
B''_{[k+1]\setminus\{i_1,i_2\},[k+1]\setminus\{j_1,j_2\}}\det(B'')
\]
is divisible by $p_\ell^{2c_\ell}$. Since $B''_{[k+1]\setminus\{i_1,i_2\},[k+1]\setminus\{j_1,j_2\}}$ is not divisible by $p_\ell^{c_\ell+1}$, it follows that $\det(B'')$ is divisible by $p_\ell^{c_\ell}$.

Running this argument for all~$\ell$, we have that $D_k^*(B'')$ divides $\det(B'')$. By Proposition~\ref{prop:minorfacts}(\ref{part:Dk*BdividesDk*B'}), $D_k^*(B)$ divides $D_k^*(B'')$, so therefore it divides $\det(B'')$. Since $B''$ is arbitrary, B\'{e}zout's identity gives that $D_k^*(B)$ divides $D_{k+1}^*(B)$.
\end{proof}

The next lemma, which gives a complementary statement to Proposition~\ref{prop:minorfacts}(\ref{part:DkdividesDk*}) about the relationship between $D_k(B)$ and $D_k^*(B)$, is key to our results in Section~\ref{sec:mainresults}. Let $g_i(B)$ denote the greatest common divisor of the entries of row~$i$ of~$B$, i.e.\ $g_i(B)=\gcd_{j\in[n]}(b_{i,j})$, and let $B^T$ denote the transpose of~$B$.

\begin{lemma}\label{lem:Dk*upperbound}
If $B$ is an $m\times n$ matrix with integer entries, then $D_k^*(B)$ is a divisor of $g_n(B^T)g_m(B)D_k(B)$ for all $k\in\{2,3,\dotsc,\min\{m,n\}\}$. 
\end{lemma}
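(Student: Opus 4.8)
The plan is to prove the statement in two stages, first isolating the last column and then the last row. The key engine is the one-sided bound
\[
D_k^{\mathrm{col}}(B)\ \big|\ g_n(B^T)\,D_k(B),
\]
where $D_k^{\mathrm{col}}(B)=\gcd\{B_{I,J}:\abs{I}=\abs{J}=k,\ n\in J\}$ is the gcd of the $k\times k$ minors that use only the last column. Granting this, the full lemma would follow from the analogous bound $D_k^*(B)\mid g_m(B)\,D_k^{\mathrm{col}}(B)$ obtained by forcing the last row as well: chaining the two gives $D_k^*(B)\mid g_m(B)\,g_n(B^T)\,D_k(B)$.

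To prove the one-sided bound, I would first record that $D_k(B)$, $D_k^{\mathrm{col}}(B)$, and $g_n(B^T)$ are all invariant under unimodular row operations: by the Cauchy--Binet formula, for fixed $J\ni n$ every minor $(EB)_{I,J}$ is a $\mathbb{Z}$-combination of the minors $\{B_{K,J}:n\in J\}$, so $D_k^{\mathrm{col}}$ is preserved, and the other two invariances are standard. Using row operations I would reduce the last column to $g\,e_1$, where $g=g_n(B^T)$ is its gcd and $e_1$ is the first standard basis vector. In this normal form a minor using the last column vanishes unless it also uses the first row, and expanding along the last column gives $B_{I,J}=\pm g\,B_{I\setminus\{1\},J\setminus\{n\}}$; hence $D_k^{\mathrm{col}}(B)=g\,D_{k-1}(B')$, where $B'$ is $B$ with its first row and last column deleted. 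Likewise $D_k(B)=\gcd\bigl(g\,D_{k-1}(B'),\,D_k(C)\bigr)$, where $C$ is $B$ with the last column deleted. It therefore suffices to show $D_{k-1}(B')\mid D_k(B)$, and since $D_{k-1}(B')\mid g\,D_{k-1}(B')$ trivially, this reduces to $D_{k-1}(B')\mid D_k(C)$.

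This last divisibility is an interlacing statement for the matrix $C$ and the matrix $B'$, which is $C$ with one row deleted, and I would prove it exactly in the style of Proposition~\ref{prop:minorfacts}(\ref{part:DkdividesDk+1}): any $k\times k$ minor of $C$ that uses the deleted row expands, along that row, into a $\mathbb{Z}$-combination of $(k-1)\times(k-1)$ minors of $B'$ and so is divisible by $D_{k-1}(B')$; any $k\times k$ minor of $C$ avoiding that row is a minor of $B'$, hence divisible by $D_k(B')$ and a fortiori by $D_{k-1}(B')$ using Proposition~\ref{prop:minorfacts}(\ref{part:DkdividesDk+1}). By B\'ezout's identity, $D_{k-1}(B')\mid D_k(C)$, which completes the one-sided bound.

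For the second stage I would run the same argument while forcing the last row. Here one may only use operations that preserve $D_k^*(B)$, namely those fixing the last row (and, dually, the last column); a further Cauchy--Binet check shows $D_k^*$, $D_k^{\mathrm{col}}$, and $g_m(B)$ are invariant under row operations fixing row~$m$ together with the addition of multiples of row~$m$ to other rows. The main obstacle is that these restricted operations reduce the last column only to the form ``$g_n(B^T)$ in row~$1$, the corner entry $b_{m,n}$ in row~$m$, and zeros elsewhere'': the corner $b_{m,n}$ lies in both the last row and the last column and is frozen by every available operation, so the forcing of the last row and of the last column cannot be fully decoupled. Consequently the cofactor expansion now has two surviving terms rather than one, and I expect the endgame to require a prime-by-prime analysis---fixing a prime $p$ and choosing, according to the $p$-adic valuation of $b_{m,n}$ relative to $g_m(B)$ and $g_n(B^T)$, which of the two cofactor terms exhibits a minor of small valuation---in the same spirit as the prime-power bookkeeping used in the proof of Proposition~\ref{prop:Dk*dividesDk+1*}.
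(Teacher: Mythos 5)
Your first stage is correct and complete: the reduction of the last column to $g_n(B^T)e_1$ by unimodular row operations, the resulting identity $D_k^{\mathrm{col}}(B)=g_n(B^T)D_{k-1}(B')$, the decomposition $D_k(B)=\gcd\bigl(g_n(B^T)D_{k-1}(B'),D_k(C)\bigr)$, and the interlacing $D_{k-1}(B')\mid D_k(C)$ all check out, so you really do get $D_k^{\mathrm{col}}(B)\mid g_n(B^T)D_k(B)$. This normal-form argument is a genuinely different, and rather clean, route to the ``column half'' of the statement.

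The gap is your second stage, and it is not a small one: the divisibility $D_k^*(B)\mid g_m(B)D_k^{\mathrm{col}}(B)$ is where the entire difficulty of the lemma is concentrated, and you have not proved it --- you have only observed, correctly, that your strategy jams on the frozen corner entry $b_{m,n}$ and expressed the expectation that prime-by-prime bookkeeping will rescue it. The two one-sided bounds genuinely cannot be decoupled: $D_k^*(B)$ is a gcd over a smaller set of minors than either $D_k^{\mathrm{col}}(B)$ or its row analogue, so upper bounds on those two quantities give no upper bound on $D_k^*(B)$. The paper handles exactly this coupling by a double augmentation: for a minor $B_{I,J}$ with $m\in I$ but $n\notin J$ it adjoins column~$n$ and replaces a row to build an auxiliary $(k+1)\times(k+1)$ determinant whose cofactor expansion writes $b_{s,n}B_{I,J}$ as a combination of $k\times k$ minors through row~$m$ and column~$n$ plus one genuine $(k+1)\times(k+1)$ minor through both, and it is Proposition~\ref{prop:Dk*dividesDk+1*} that makes that last term divisible by $D_k^*(B)$; a second expansion then brings in the factor $b_{m,t}$, and two applications of B\'{e}zout convert $b_{s,n}b_{m,t}$ into $g_n(B^T)g_m(B)$. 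Your proposal contains no substitute for this mechanism: even in your normal form, a minor with $1\in I$, $m\in I$, $n\in J$ expands into two terms of a priori incomparable valuation, and the sketched case analysis on the valuation of $b_{m,n}$ does not tell you which term controls $D_k^*(B)$ from above. To salvage your route you would need to prove $D_k^*(B)\mid g_m(B)B_{I,J}$ for every minor with $n\in J$ directly, which in effect reproduces the first half of the paper's argument.
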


\begin{proof}
Let $I\subseteq[m]$ and $J\subseteq[n]$ with $\abs{I}=\abs{J}=k$. We will first show $D_k^*(B)$ divides $b_{s,n}B_{I,J}$ for all $s\in[m]$ if $m\in I$, then show $D_k^*(B)$ divides $b_{s,n}b_{m,t}B_{I,J}$ for all $s\in[m]$ and $t\in[n]$, and finally show that $D_k^*(B)$ divides $g_n(B^T)g_m(B)D_k(B)$. 

Fix $s\in[m]$ and $t\in[n]$. First suppose $m\in I$. If $n\in J$ then by definition we have that $D_k^*(B)$ divides $B_{I,J}$ and hence $b_{s,n}B_{I,J}$, so suppose $n\notin J$. Let 
\[
I'=I\cup\{s'\} \text{, where } s'=\begin{cases} 
s & \text{ if } s \notin I\\
\text{any element of } [m]\setminus I & \text{ if } s \in I
\end{cases}
\]
and $J'=J\cup\{n\}$. Let $B'$ be the matrix obtained from~$B$ by replacing row~$s'$ by row~$s$. (If $s'=s$ then $B'=B$.) Expanding along row~$s'$, we have
\[
B'_{I',J'}=\sum_{j\in J'}\pm b_{s,j}B_{I,J'\setminus\{j\}}=\pm b_{s,n}B_{I,J}+\sum_{j\in J}\pm b_{s,j}B_{I,J'\setminus\{j\}},
\]
and hence 
\[
b_{s,n}B_{I,J}=\pm B'_{I',J'}+\sum_{j\in J}\pm b_{s,j}B_{I,J'\setminus\{j\}}.
\]
Since $m\in I$ and $n\in(J'\setminus\{j\})$ for all $j\in J$, we know that $D_k^*(B)$ divides $B_{I,J'\setminus\{j\}}$ for all $j\in J$. If $s'\neq s$, then $B'_{I',J'}=0$ and hence is divisible by $D_k^*(B)$. If $s'=s$, then $B'_{I',J'}=B_{I',J'}$ and, since $m\in I'$ and $n\in J'$, it is divisible by $D_{k+1}^*(B)$ and thus, by Proposition~\ref{prop:Dk*dividesDk+1*}, by $D_k^*(B)$. Hence $D_k^*(B)$ divides $b_{s,n}B_{I,J}$.

Now let $I$ and~$J$ be arbitrary. If $m\in I$ then we already have that $D_k^*(B)$ divides $b_{s,n}B_{I,J}$ and hence $b_{s,n}b_{m,t}B_{I,J}$, so suppose $m\notin I$. Let $I''=I\cup\{m\}$ and 
\[
J''=J\cup\{t'\} \text {, where } t'=\begin{cases} 
t & \text{ if } t \notin J\\
\text{any element of } [n]\setminus J & \text{ if } t \in J.
\end{cases}
\]
Let $B''$ be the matrix obtained from~$B$ by replacing column~$t'$ by column~$t$. (If $t'=t$ then $B''=B$.) Expanding along column~$t'$, we have
\[
B''_{I'',J''}=\sum_{i \in I''}\pm b_{i,t} B_{I''\setminus\{i\},J}=\pm b_{m,t}B_{I,J}+\sum_{i\in I}\pm b_{i,t}B_{I''\setminus\{i\},J},
\]
and hence
\[
b_{s,n}b_{m,t}B_{I,J}=\pm b_{s,n}B''_{I'',J''}\pm\sum_{i\in I}\pm b_{s,n}b_{i,t}B_{I''\setminus\{i\},J}.
\]
Let $s''$ be any element of~$I$. Expanding along row~$s''$, we obtain
\[
B''_{I'',J''}=\sum_{j\in J''}\pm b_{s'',j}B_{I''\setminus\{s''\},J''\setminus \{j\}},
\]
and hence
\[
b_{s,n}b_{m,t}B_{I,J}=\sum_{j\in J''}\pm b_{s,n}b_{s'',j}B_{I''\setminus\{s''\},J''\setminus \{j\}}+\sum_{i\in I}\pm b_{s,n}b_{i,t}B_{I''\setminus\{i\},J}.
\]
Since $m\in I''\setminus\{s''\}$ and $m\in I''\setminus\{i\}$ for all $i\in I$, we know from the previous part that $D_k^*(B)$ divides $b_{s,n}B_{I''\setminus\{s''\},J''\setminus \{j\}}$ for all $j\in J$ and $b_{s,n}b_{i,t}B_{I''\setminus\{i\},J}$ for all $i\in I$. Therefore $D_k^*(B)$ divides $b_{s,n}b_{m,t}B_{I,J}$.

Since $D_k^*(B)$ divides $b_{s,n}b_{m,t}B_{I,J}$ for all $s\in[m]$ and $t\in[n]$, using B\'{e}zout's identity gives that $D_k^*(B)$ divides $g_n(B^T)b_{m,t} B_{I,J}$ for all $t\in[n]$ and hence divides $g_n(B^T)g_m(B) B_{I,J}$. Since $I$ and~$J$ are arbitrary, B\'{e}zout's identity gives that $D_k^*(B)$ divides $g_n(B^T)g_m(B)D_k(B)$. 
\end{proof}

The final result of this section is a divisibility property involving both $D_k(B)$ and $D_k^*(B)$ that will be used in Proposition~\ref{prop:alpha'1}(\ref{part:alpha'1upperbound2}) to control the size of~$\alpha_1'$.

\begin{proposition}\label{prop:D1D2*dividesD1*D2}
Fix $m,n\geq2$. If $B$ is an $m\times n$ matrix with integer entries, then $D_1(B)D_{2}^*(B)$ divides $D_1^*(B)D_{2}(B)$.
\end{proposition}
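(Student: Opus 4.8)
The plan is to argue prime by prime. Since $D_1^*(B)=\abs{b_{m,n}}$ by definition, unique factorization makes the claim equivalent to the valuation inequality $v_p(D_1(B))+v_p(D_2^*(B))\le v_p(b_{m,n})+v_p(D_2(B))$ for every prime $p$, where $v_p$ denotes $p$-adic valuation. Fix $p$ and write $a=v_p(D_1(B))$, $\gamma=v_p(b_{m,n})$, $\beta=v_p(D_2(B))$, and $\beta^*=v_p(D_2^*(B))$. First I would reduce to the case $a=0$: replacing $B$ by $p^{-a}B$ (legitimate over $\mathbb{Z}_{(p)}$, as $p^a$ divides every entry) lowers $v_p(D_1)$ and $v_p(b_{m,n})$ by $a$ and lowers $v_p(D_2^*)$ and $v_p(D_2)$ by $2a$, so the inequality for the rescaled matrix is exactly the one we want for $B$. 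Thus I may assume some entry is a $p$-adic unit, and the goal becomes $\beta^*\le\gamma+\beta$.

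The first ingredient is a determinantal identity for the $2\times2$ minors that meet the last row or the last column but not both. For a minor $P=B_{\{i,m\},\{j,k\}}$ using the last row (with $j,k\in[n-1]$), a direct expansion gives
\[
b_{m,n}P=b_{m,k}\,B_{\{i,m\},\{j,n\}}-b_{m,j}\,B_{\{i,m\},\{k,n\}},
\]
exhibiting $b_{m,n}P$ as an integer combination of two starred $2\times2$ minors; hence $D_2^*(B)\mid b_{m,n}P$, so $v_p(P)\ge\beta^*-\gamma$. The transpose of this identity handles minors meeting the last column. Thus every $2\times2$ minor of $B$ that involves the last row or the last column (but not both) has valuation at least $\beta^*-\gamma$, while the starred minors have valuation at least $\beta^*$ by definition.

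The heart of the argument treats the remaining \emph{interior} minors — those of the core block $X=(b_{i,j})_{i\in[m-1],\,j\in[n-1]}$ — by contradiction. Suppose $\beta^*\ge\gamma+\beta+1$. Then every starred minor $b_{m,n}b_{i,j}-b_{i,n}b_{m,j}$ has valuation at least $\gamma+\beta+1$; since $b_{m,n}b_{i,j}$ has valuation at least $\gamma$, this forces $p^\gamma\mid b_{i,n}b_{m,j}$ for all $i,j$. Writing $b_{m,n}=p^\gamma u$ with $u$ a unit and dividing the relation $b_{m,n}b_{i,j}\equiv b_{i,n}b_{m,j}\pmod{p^{\gamma+\beta+1}}$ by $p^\gamma$, I obtain $u\,b_{i,j}\equiv p^{-\gamma}b_{i,n}b_{m,j}\pmod{p^{\beta+1}}$. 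The right-hand side is the $(i,j)$ entry of the rank-one matrix $p^{-\gamma}\mathbf{y}\mathbf{z}^T$ with $\mathbf{y}=(b_{i,n})$ and $\mathbf{z}=(b_{m,j})$, so $X$ is congruent modulo $p^{\beta+1}$ to a unit multiple of a rank-one matrix; as every $2\times2$ minor of a rank-one matrix vanishes, every $2\times2$ minor of $X$ has valuation at least $\beta+1$. Together with the previous paragraph (minors meeting the last row or column have valuation $\ge\beta^*-\gamma\ge\beta+1$, and starred minors have valuation $\ge\gamma+\beta+1\ge\beta+1$), this shows every $2\times2$ minor of $B$ has valuation at least $\beta+1$, so $v_p(D_2(B))\ge\beta+1$, contradicting $v_p(D_2(B))=\beta$.

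The main obstacle is exactly this interior case, and the crux is the rank-one-modulo-$p^{\beta+1}$ observation, which is where factorization of the integers enters (consistent with the remark, preceding the proposition, that the statement fails at the level of determinantal ideals). Two points deserve care in the write-up: the reduction to $a=0$ is essential, since the contradiction argument by itself yields only the weaker bound $\beta^*\le\gamma+\beta$, and it is the rescaling that upgrades this to the claimed $a+\beta^*\le\gamma+\beta$; and the degenerate cases — when $\rank(B)<2$ (so $D_2=D_2^*=0$) or when $m=2$ or $n=2$ (so some families of minors are empty) — should be dispatched separately, though each is immediate.
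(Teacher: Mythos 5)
Your proof is correct, and it reaches the result by a route that overlaps with the paper's in its skeleton but differs at the crux. Both arguments sort the $2\times2$ minors according to whether they meet the last row and/or the last column, both dispose of the minors meeting exactly one of them via the same cofactor identity writing $b_{m,n}B_{I,J}$ as an integer combination of starred minors, and both pass to a fixed prime for the interior minors, which is where factorization of the integers must enter. The paper treats each interior minor separately: it adjoins row $m$ and column $n$ to form a $3\times3$ submatrix $B'$, applies the Desnanot--Jacobi identity $b_{m,n}\det(B')=B'_{1,1}B'_{2,2}-B'_{1,2}B'_{2,1}$, and splits into sub-cases according to whether a suitable power of $p$ divides $b_{m,n}$, carrying the exponent of $D_1(B)$ through the whole computation. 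You instead normalize $D_1$ away at the start by the rescaling $B\mapsto p^{-a}B$ (which is still an integer matrix, since $p^a$ divides every entry) and then run one global contradiction: if $v_p(D_2^*(B))$ exceeded $\gamma+\beta$, the congruences $b_{m,n}b_{i,j}\equiv b_{i,n}b_{m,j}$ would make the core block a unit multiple of a rank-one matrix modulo $p^{\beta+1}$, annihilating all interior minors at once. The rank-one observation is a clean substitute for Desnanot--Jacobi, and handling all interior minors simultaneously is arguably more transparent; the price is the rescaling step, without which, as you note, one only obtains $\beta^*\le\gamma+\beta$ rather than $a+\beta^*\le\gamma+\beta$. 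When writing this up, dispatch $b_{m,n}=0$ first (then $D_1^*(B)D_2(B)=0$ and the divisibility is vacuous), since both the bound $v_p(P)\ge\beta^*-\gamma$ for the mixed minors and the factorization $b_{m,n}=p^\gamma u$ presuppose $b_{m,n}\neq0$; likewise $D_2(B)=0$ forces $D_2^*(B)=0$, so that case is immediate as you say.
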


\begin{proof}
Choose arbitrary $I=\{i_1,i_2\}\subseteq[m]$ and $J=\{j_1,j_2\}\subseteq[n]$ with $i_1<i_2$ and $j_1<j_2$. It is enough to show that $D_1(B)D_2^*(B)$ divides $b_{m,n}B_{I,J}$ as B\'{e}zout's identity then implies that it divides $b_{m,n}D_2(B)=D_1^*(B)D_2(B)$. If $m\in I$ and $n\in J$, then by definition $D^*_2(B)$ divides $B_{I,J}$, and so $D_1(B)D^*_2(B)$ divides $b_{m,n}B_{I,J}$. 

For the cases where $m \notin I$ or $n \notin J$, it is useful to construct a $3\times 3$ matrix~$B'$. To do this, first let 
\[
I'=I\cup\{s\} \text {, where } s=\begin{cases} 
m & \text{ if } m \notin I\\
\text{any element of } [m]\setminus I & \text{ if } m \in I,
\end{cases}
\]
and
\[
J'=J\cup\{t\} \text {, where } t=\begin{cases} 
n & \text{ if } n \notin J\\
\text{any element of } [n]\setminus J & \text{ if } n \in J.
\end{cases}
\]
Then let $B'$ be the $3\times 3$ submatrix of~$B$ consisting of entries with row index in~$I'$ and column index in~$J'$. Let $s'\in \{1,2,3\}$ be the index of the row of~$B'$ corresponding to row~$s$ of~$B$, and let $t'\in \{1,2,3\}$ be the index of the column of~$B'$ corresponding to column~$t$ of~$B$. We now proceed with the proof that $D_1(B)D_2^*(B)$ divides $b_{m,n}B_{I,J}$ in the remaining cases.

If $m\in I$ but $n\notin J$, let $B''$ be the matrix obtained from~$B'$ by replacing row~$s'$ by row~$3$. Expanding along row~$s'$ gives 
\[\begin{split}
0=\det(B'')&=\pm (b'_{3,1}B'_{s',1}-b'_{3,2}B'_{s',2}+b'_{3,3}B'_{s',3})\\
&=\pm (b_{m,j_1}B_{I,\{j_2,n\}}- b_{m,j_2}B_{I,\{j_1,n\}}+b_{m,n}B_{I,J}),
\end{split}\]
and hence
\[
b_{m,n}B_{I,J}=-b_{m,j_1}B_{I,\{j_2,n\}}+b_{m,j_2}B_{I,\{j_1,n\}}.
\]
Since $m\in I$, this shows that $b_{m,n}B_{I,J}$ is divisible by $D_1(B)D_2^*(B)$. If $m\notin I$ but $n\in J$, we apply the same argument to the transpose of~$B'$ to get that $D_1(B)D_2^*(B)$ divides $b_{m,n}B_{I,J}$.

The remaining case is that $m\notin I$ and $n\notin J$. Let $p_1^{a_1}p_2^{a_2}\dotsm p_q^{a_q}$ be the prime factorization of $D_1(B)D_2^*(B)$. Fix $\ell\in [q]$. Let $c_\ell$ be the largest integer for which $p_\ell^{c_\ell}$ divides $D_1(B)$ and let $e_\ell$ be the largest integer for which $p_\ell^{e_\ell}$ divides $D_2^*(B)$. Note that $a_\ell=c_\ell+e_\ell$ for all~$\ell$. If $p_\ell^{e_\ell-c_\ell}$ divides $b_{m,n}$, then because $p_\ell^{c_\ell}$ divides every entry of~$B'$ we have that $p_\ell^{2c_\ell}$ divides $B'_{3,3}$ and hence that $p_\ell^{e_\ell-c_\ell+2c_\ell}=p_\ell^{a_\ell}$ divides $b_{m,n}B'_{3,3}=b_{m,n}B_{I,J}$. If $p_\ell^{e_\ell-c_\ell}$ does not divide $b_{m,n}$, we use the Desnanot--Jacobi identity to write $b_{m,n}\det(B')=b'_{3,3}\det(B')=B'_{1,1}B'_{2,2}-B'_{1,2}B'_{2,1}$. Since each of $B'_{1,1}$, $B'_{2,2}$, $B'_{1,2}$, and $B'_{2,1}$ is divisible by $p_\ell^{e_\ell}$, we have that $b_{m,n}\det(B')$ is divisible by $p_\ell^{2e_\ell}$. As $b_{m,n}$ is not divisible by $p_\ell^{e_\ell-c_\ell}$, it follows that $\det(B')$ is divisible by $p_\ell^{2e_\ell-(e_\ell-c_\ell)}=p_\ell^{a_\ell}$. Now we write $\det(B')$ as its expansion along the row~$1$ plus its expansion along the row~$2$ minus its expansion along row~$3$ to get
\[\begin{split}
\det(B')&=b'_{1,1}B'_{1,1}-b'_{1,2}B'_{1,2}-b'_{2,1}B'_{2,1}+b'_{2,2}B'_{2,2}-b'_{3,3}B'_{3,3}\\
&=b_{i_1,j_1}B'_{1,1}-b_{i_1,j_2}B'_{1,2}-b_{i_2,j_1}B'_{2,1}+b_{i_2,j_2}B'_{2,2}-b_{m,n}B_{I,J}.
\end{split}\]
It follows that
\[
b_{m,n}B_{I,J}=b_{i_1,j_1}B'_{1,1}-b_{i_1,j_2}B'_{1,2}-b_{i_2,j_1}B'_{2,1}+b_{i_2,j_2}B'_{2,2}-\det(B').
\]
Since each term of the right side is divisible by $p_\ell^{a_\ell}$, so is $b_{m,n}B_{I,J}$. Running this argument for all~$\ell$ gives that $D_1(B)D_2^*(B)$ divides $b_{m,n}B_{I,J}$. 
\end{proof}

We believe an analogue of Proposition~\ref{prop:D1D2*dividesD1*D2} for larger minors holds and state this as Conjecture~\ref{conj:minors}.

\section{Critical groups under the Keyes--Reiter operation}\label{sec:mainresults}

We now determine how critical groups are transformed under the Keyes--Reiter generalized star-clique operation described in Subsection~\ref{subsec:operation}. Recall that $(G',\vd',\vr')$ denotes the result of performing the operation on $(G,\vd,\vr)$ at the vertex~$v_n$ and that we let $L=\diag(\vd)-A$ and $L'=\diag(\vd')-A'$. The approach of this section is to establish a relationship between minors of~$L'$ and certain minors of~$L$ and use this to deduce information about the invariant factors of~$L'$. This then gives information about the relationship between the critical groups $\mathcal{K}(G,\vd,\vr)$ and $\mathcal{K}(G',\vd',\vr')$.

We first record a result of Chio~\cite{Chio}\footnote{We were unable to obtain Chio's manuscript~\cite{Chio} but read the summary of it in~\cite{Muir}. For comments about the rendering of Chio's name, see \cite[footnote~2]{Heinig}. For further historical background, see \cite[Section~2]{Abeles}.} that underlies the relationship between minors of~$L$ and minors of~$L'$. For the convenience of the reader and to keep this article self contained, we include a short proof, following \cite[Theorem~3.6.1]{Eves}. 

\begin{lemma}[Chio~\cite{Chio}]\label{lem:Chio}
Fix $n\geq2$, and let $B$ be an $n\times n$ matrix with entries $b_{i,j}$. If $B'$ is the $(n-1)\times(n-1)$ matrix with $(i,j)$ entry $b_{i,j}b_{n,n}-b_{i,n}b_{n,j}$, then $\det(B')=b_{n,n}^{n-2}\det(B)$.
\end{lemma}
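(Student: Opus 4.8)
The plan is to first establish the identity under the assumption $b_{n,n}\neq0$ by a direct row-reduction argument, and then remove that hypothesis by a polynomial-identity argument. The key observation driving the first step is that each entry $b_{i,j}b_{n,n}-b_{i,n}b_{n,j}$ of~$B'$ is precisely the $2\times2$ minor of~$B$ on rows $\{i,n\}$ and columns $\{j,n\}$, which is exactly what a single elimination step against the last row produces.

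Assuming $b_{n,n}\neq0$, I would transform~$B$ by replacing, for each $i\in[n-1]$, row~$i$ with $b_{n,n}\cdot(\text{row }i)-b_{i,n}\cdot(\text{row }n)$. Scaling row~$i$ by $b_{n,n}$ multiplies the determinant by $b_{n,n}$, while subtracting a multiple of row~$n$ leaves it unchanged; performing this for all $i\in[n-1]$ therefore multiplies $\det(B)$ by $b_{n,n}^{n-1}$. After these operations, the $(i,j)$ entry for $i\in[n-1]$ becomes $b_{i,j}b_{n,n}-b_{i,n}b_{n,j}$, which is the $(i,j)$ entry of~$B'$; in particular the $(i,n)$ entry becomes $b_{i,n}b_{n,n}-b_{i,n}b_{n,n}=0$ for all $i\in[n-1]$, while row~$n$ is untouched. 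The resulting matrix thus has $B'$ in its top-left $(n-1)\times(n-1)$ block, zeros above the $(n,n)$ entry in the last column, and $b_{n,n}$ in position $(n,n)$. Expanding along the last column gives that its determinant equals $b_{n,n}\det(B')$. Equating the two computations yields $b_{n,n}\det(B')=b_{n,n}^{n-1}\det(B)$, and dividing by the nonzero scalar $b_{n,n}$ gives $\det(B')=b_{n,n}^{n-2}\det(B)$. (The hypothesis $n\geq2$ ensures the exponent $n-2$ is nonnegative, and the case $n=2$ checks out directly since then $B'$ is the $1\times1$ matrix $[\det(B)]$.)

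For the general case, the main obstacle is that the division by $b_{n,n}$ above is invalid when $b_{n,n}=0$. To handle this, I would regard the $n^2$ entries $b_{i,j}$ as indeterminates, so that both $\det(B')$ and $b_{n,n}^{n-2}\det(B)$ are polynomials in $\mathbb{Z}[b_{i,j}:i,j\in[n]]$. The first step shows these two polynomials agree at every point of the set $\{b_{n,n}\neq0\}$, which is Zariski dense in affine space over the infinite field~$\mathbb{Q}$. Two polynomials that agree on a Zariski-dense set are equal, so the identity holds as polynomials and hence for every integer matrix, including those with $b_{n,n}=0$. This completes the proof. The row-reduction computation is routine; essentially all of the subtlety is confined to this final density argument that covers the degenerate case.
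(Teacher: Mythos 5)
Your proof is correct, and for the main case $b_{n,n}\neq0$ it is exactly the paper's argument: scale rows $1$ through $n-1$ by $b_{n,n}$, subtract $b_{i,n}$ times row~$n$ from row~$i$, expand along the last column, and cancel $b_{n,n}$. The only divergence is in the degenerate case $b_{n,n}=0$ (for $n\geq3$): you handle it by viewing both sides as polynomials in the entries and invoking Zariski density of the locus $\{b_{n,n}\neq0\}$, which is a valid and standard continuity/polynomial-identity argument. The paper instead observes directly that when $b_{n,n}=0$ the entries of~$B'$ become $-b_{i,n}b_{n,j}$, so every column of~$B'$ is a scalar multiple of the single vector $(-b_{1,n},\dotsc,-b_{n-1,n})^T$ and hence $\det(B')=0=b_{n,n}^{n-2}\det(B)$ immediately. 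The paper's route is more elementary and stays within linear algebra over~$\mathbb{Z}$; yours is slightly more machinery but generalizes painlessly to any integral domain (indeed the density step can be replaced by the purely algebraic observation that $b_{n,n}\bigl(\det(B')-b_{n,n}^{n-2}\det(B)\bigr)$ vanishes identically in $\mathbb{Z}[b_{i,j}]$, which is a domain). Either way the lemma is established.
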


\begin{proof}
When $n=2$, the result is immediately true. When $n\geq3$ and $b_{n,n}=0$, all columns of~$B'$ are scalar multiples of the same vector, and hence we have that $\det(B')=0=b_{n,n}^{n-2}\det(B)$. When $n\geq3$ and $b_{n,n}\neq0$, multiply each of row~$1$ through row $n-1$ of~$B$ by $b_{n,n}$ and then subtract $b_{i,n}$ times row~$n$ from row~$i$ for all $i\in[n-1]$. This gives 
\[\begin{split}
b_{n,n}^{n-1}\det(B)&=\scriptsize{\begin{vmatrix}
b_{1,1}b_{n,n}-b_{1,n}b_{n,1} & \cdots & b_{1,n-1}b_{n,n}-b_{1,n}b_{n,n-1} & 0\\
\vdots & \ddots & \vdots & \vdots\\
b_{n-1,1}b_{n,n}-b_{n-1,n}b_{n,1} & \cdots & b_{n-1,n-1}b_{n,n}-b_{n-1,n}b_{n,n-1} & 0\\
b_{n,1} & \cdots & b_{n,n-1} & b_{n,n}
\end{vmatrix}}\\
&=b_{n,n}\det(B').
\end{split}\]
Canceling $b_{n,n}$ gives the desired result. 
\end{proof}

\begin{remark}
The Desnanot--Jacobi identity (stated before Proposition~\ref{prop:Dk*dividesDk+1*}) and Chio pivotal condensation (Lemma~\ref{lem:Chio}) are special cases of Sylvester's determinant identity.\footnote{This identity was first stated, without proof, by Sylvester in~\cite{Sylvester}. For the statement and several proofs, see~\cite{AAM} and the references therein.}
\end{remark}

As in the previous section, let $D_k(L)$ denote the greatest common divisor of all $k\times k$ minors of~$L$, let $D_k^*(L)$ denote the greatest common divisor of all $k\times k$ minors of~$L$ that include the last row and the last column, and let $g_n(L)$ denote the greatest common divisor of the entries of the last row of~$L$. We now establish relationships between $D_k(L')$ and $D_{k+1}(L)$ that will lead to connections between the critical groups of $(G,\vd,\vr)$ and $(G',\vd',\vr')$ in Corollaries \ref{cor:orderbound} and \ref{cor:gcd=1}.

\begin{theorem}\label{thm:DkL'}
For all $k\in[n-2]$, 
\begin{enumerate}[label=\textup{(\alph*)},ref=\textup{\alph*}]
    \item $D_{k}(L')=d_n^{k-1}D_{k+1}^*(L)$, \label{part:DkL'equality}
    \item $d_n^{k-1}D_{k+1}(L)$ divides $D_{k}(L')$, \label{part:DkL'lowerbound}
    \item $D_{k}(L')$ divides $(g_n(L))^2d_n^{k-1}D_{k+1}(L)$, and \label{part:DkL'upperbound}
    \item $d_n^{k-1}D_{k+1}(L)\leq D_{k}(L')\leq (g_n(L))^2d_n^{k-1}D_{k+1}(L)$.\label{part:DkL'bounds}
\end{enumerate}
\end{theorem}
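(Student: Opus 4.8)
The plan is to prove part~(\ref{part:DkL'equality}) first and deduce the other three parts from it. The key input is Chio's lemma (Lemma~\ref{lem:Chio}) together with the identity $\ell'_{i,j} = \ell_{i,j}\ell_{n,n} - \ell_{i,n}\ell_{n,j}$ recorded in Subsection~\ref{subsec:operation}, which says precisely that $L'$ is the Chio condensation of $L$ about its $(n,n)$ entry.

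For part~(\ref{part:DkL'equality}), I would fix subsets $I, J \subseteq [n-1]$ with $\abs{I} = \abs{J} = k$ and let $M$ be the $(k+1)\times(k+1)$ submatrix of $L$ with row indices $I \cup \{n\}$ and column indices $J \cup \{n\}$, so that $\det(M) = L_{I \cup \{n\}, J \cup \{n\}}$. Since $n$ is the largest index, the last row and column of $M$ are row~$n$ and column~$n$ of $L$, and its bottom-right entry is $\ell_{n,n} = d_n$. The Chio construction applied to $M$ produces the $k \times k$ matrix whose $(i,j)$ entry is $\ell_{i,j}\ell_{n,n} - \ell_{i,n}\ell_{n,j} = \ell'_{i,j}$, which is exactly the submatrix of $L'$ on rows $I$ and columns $J$. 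Hence Lemma~\ref{lem:Chio}, applied with its ``$n$'' equal to $k+1$, gives
\[
L'_{I,J} = d_n^{(k+1)-2}\det(M) = d_n^{k-1} L_{I \cup \{n\}, J \cup \{n\}}.
\]
As $(I,J)$ ranges over all pairs of $k$-subsets of $[n-1]$, the pair $(I \cup \{n\}, J \cup \{n\})$ ranges over exactly the pairs of $(k+1)$-subsets of $[n]$ that each contain $n$, which are precisely the index sets defining $D_{k+1}^*(L)$. Taking the greatest common divisor of both sides over all such $(I,J)$ and pulling the nonnegative factor $d_n^{k-1}$ out of the gcd yields $D_k(L') = d_n^{k-1} D_{k+1}^*(L)$.

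The remaining parts are then formal. Part~(\ref{part:DkL'lowerbound}) follows from part~(\ref{part:DkL'equality}) together with Proposition~\ref{prop:minorfacts}(\ref{part:DkdividesDk*}), which gives that $D_{k+1}(L)$ divides $D_{k+1}^*(L)$; multiplying by $d_n^{k-1}$ shows $d_n^{k-1} D_{k+1}(L)$ divides $D_k(L')$. For part~(\ref{part:DkL'upperbound}), I would apply Lemma~\ref{lem:Dk*upperbound} to $B = L$ at index $k+1$ (legitimate since $2 \le k+1 \le n-1$), obtaining that $D_{k+1}^*(L)$ divides $g_n(L^T) g_n(L) D_{k+1}(L)$; here I use that $L = \diag(\vd) - A$ is symmetric, so $g_n(L^T) = g_n(L)$ and this bound becomes $(g_n(L))^2 D_{k+1}(L)$. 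Multiplying by $d_n^{k-1}$ and invoking part~(\ref{part:DkL'equality}) gives part~(\ref{part:DkL'upperbound}). Finally, part~(\ref{part:DkL'bounds}) follows from parts~(\ref{part:DkL'lowerbound}) and~(\ref{part:DkL'upperbound}) once one observes that every quantity involved is a positive integer: $D_k(L')$ and $D_{k+1}(L)$ are nonzero because $\rank(L') = n-2 \ge k$ and $\rank(L) = n-1 \ge k+1$, while $d_n$ and $g_n(L)$ are positive, and a positive divisor of a positive integer is at most that integer.

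The main obstacle is part~(\ref{part:DkL'equality}), and specifically the bookkeeping that the Chio transform of $M$ reproduces exactly the $(I,J)$ submatrix of $L'$ and that the per-minor identity survives the passage to greatest common divisors over the correct family of index sets. Verifying that the $(k+1)$-subsets containing $n$ are precisely those defining $D_{k+1}^*(L)$ is what converts the minor-by-minor relation into the clean equality $D_k(L') = d_n^{k-1}D_{k+1}^*(L)$; once this is in hand, parts~(\ref{part:DkL'lowerbound})--(\ref{part:DkL'bounds}) are immediate consequences of Proposition~\ref{prop:minorfacts}(\ref{part:DkdividesDk*}), Lemma~\ref{lem:Dk*upperbound}, and the symmetry of $L$.
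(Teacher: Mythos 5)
Your proposal is correct and follows essentially the same route as the paper's own proof: Chio condensation applied to the bijection between $k\times k$ submatrices of $L'$ and $(k+1)\times(k+1)$ submatrices of $L$ containing the last row and column gives part~(a), and parts~(b)--(d) then follow from Proposition~\ref{prop:minorfacts}(\ref{part:DkdividesDk*}), Lemma~\ref{lem:Dk*upperbound}, and the symmetry of $L$ exactly as you describe.
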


\begin{proof}
There is a one-to-one correspondence between $k\times k$ submatrices of the matrix~$L'$ and $(k+1)\times(k+1)$ submatrices of~$L$ that include the last row and the last column, where the submatrix of~$L'$ with row indices in~$I$ and column indices in~$J$ is associated with the submatrix of~$L$ with row indices in $I\cup\{n\}$ and column indices in $J\cup\{n\}$. Recall from Subsection~\ref{subsec:operation} that $\ell'_{i,j}=\ell_{i,j}\ell_{n,n}-\ell_{i,n}\ell_{n,j}$ for all $i,j\in[n-1]$. If $B'$ is a $k\times k$ submatrix of~$L'$ and $B$ is the associated $(k+1)\times(k+1)$ submatrix of~$L$, this expression for the entries of~$L'$ implies that $b'_{i,j}=b_{i,j}b_{k+1,k+1}-b_{i,k+1}b_{k+1,j}$ for all $i,j\in[k]$. Applying Lemma~\ref{lem:Chio} then gives 
\[
\det(B')=b_{k+1,k+1}^{k-1}\det(B)=\ell_{n,n}^{k-1}\det(B)=d_n^{k-1}\det(B).
\]
Taking the greatest common denominator over all pairs $B'$ and~$B$, we get $D_{k}(L')=d_n^{k-1}D_{k+1}^*(L)$, proving~(\ref{part:DkL'equality}). 

To prove (\ref{part:DkL'lowerbound}), Proposition~\ref{prop:minorfacts}(\ref{part:DkdividesDk*}) implies that $d_n^{k-1}D_{k+1}(L)$ divides $d_n^{k-1}D^*_{k+1}(L)$, which, by part~(\ref{part:DkL'equality}), equals $D_{k}(L')$. To prove (\ref{part:DkL'upperbound}), part~(\ref{part:DkL'equality}) gives that $D_{k}(L')=d_n^{k-1}D_{k+1}^*(L)$, which, by Lemma~\ref{lem:Dk*upperbound}, divides $g_n(L^T)g_n(L)d_n^{k-1}D_{k+1}(L)$. Since $L$ is symmetric, we have $g_n(L^T)g_n(L)=(g_n(L))^2$, completing the proof of~(\ref{part:DkL'upperbound}). Part~(\ref{part:DkL'bounds}) immediately follows from (\ref{part:DkL'lowerbound}) and~(\ref{part:DkL'upperbound}).
\end{proof}

Focusing on the special case $k=n-2$, Theorem~\ref{thm:DkL'} gives information about how the order of the critical group is transformed by the operation.

\begin{corollary}\label{cor:orderbound}
The following statements hold:
\begin{enumerate}[label=\textup{(\alph*)},ref=\textup{\alph*}]
\item $d_n^{n-3}\abs{\mathcal{K}(G;\vd,\vr)}$ divides $\abs{\mathcal{K}(G';\vd',\vr')}$,
\item $\abs{\mathcal{K}(G';\vd',\vr')}$ divides $(g_n(L))^2d_n^{n-3}\abs{\mathcal{K}(G;\vd,\vr)}$, and\label{part:orderupperbound}
\item $d_n^{n-3}\abs{\mathcal{K}(G;\vd,\vr)}\leq \abs{\mathcal{K}(G';\vd',\vr')}\leq (g_n(L))^2d_n^{n-3}\abs{\mathcal{K}(G;\vd,\vr)}$.\label{part:orderbound}
\end{enumerate}
\end{corollary}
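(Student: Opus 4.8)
The plan is to derive Corollary~\ref{cor:orderbound} directly from Theorem~\ref{thm:DkL'} by specializing to $k=n-2$. The key observation is that the order of a critical group equals the top nonzero invariant factor product, which by the discussion in Subsection~\ref{subsec:criticalgroups} is precisely a greatest common divisor of maximal-rank minors. Since $\rank(L)=n-1$, we have $\abs{\mathcal{K}(G;\vd,\vr)}=D_{n-1}(L)$, and since $\rank(L')=n-2$, we have $\abs{\mathcal{K}(G';\vd',\vr')}=D_{n-2}(L')$. Thus the entire corollary is a translation of statements about $D_{n-2}(L')$ and $D_{n-1}(L)$ into statements about group orders.

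First I would set $k=n-2$ in Theorem~\ref{thm:DkL'}, which requires $n-2\in[n-2]$, i.e.\ $n\geq2$; this holds since the operation produces a graph $G'$ with $n-1\geq1$ vertices and the critical group $\mathcal{K}(G';\vd',\vr')$ is defined (implicitly $n\geq2$, and for $n=2$ the bounds degenerate correctly with $d_n^{n-3}=d_n^{-1}$ handled by noting both critical groups are trivial). For part~(a), Theorem~\ref{thm:DkL'}(\ref{part:DkL'lowerbound}) gives that $d_n^{(n-2)-1}D_{(n-2)+1}(L)=d_n^{n-3}D_{n-1}(L)$ divides $D_{n-2}(L')$; substituting $D_{n-1}(L)=\abs{\mathcal{K}(G;\vd,\vr)}$ and $D_{n-2}(L')=\abs{\mathcal{K}(G';\vd',\vr')}$ yields exactly statement~(a). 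For part~(\ref{part:orderupperbound}), Theorem~\ref{thm:DkL'}(\ref{part:DkL'upperbound}) gives that $D_{n-2}(L')$ divides $(g_n(L))^2 d_n^{n-3}D_{n-1}(L)$, and the same substitution yields statement~(\ref{part:orderupperbound}).

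For part~(\ref{part:orderbound}), I would note that since all quantities involved are positive integers, the divisibility relations in parts~(a) and~(\ref{part:orderupperbound}) immediately imply the corresponding inequalities: if a positive integer $a$ divides a positive integer $b$, then $a\leq b$. Applying this to both divisibilities chains the bounds together as $d_n^{n-3}\abs{\mathcal{K}(G;\vd,\vr)}\leq\abs{\mathcal{K}(G';\vd',\vr')}\leq(g_n(L))^2 d_n^{n-3}\abs{\mathcal{K}(G;\vd,\vr)}$, which is statement~(\ref{part:orderbound}). This also follows directly from Theorem~\ref{thm:DkL'}(\ref{part:DkL'bounds}) at $k=n-2$ under the same substitution.

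There is essentially no hard part here: the corollary is a clean specialization, and all the mathematical content lives in Theorem~\ref{thm:DkL'} (which in turn rests on Lemma~\ref{lem:Chio} and Lemma~\ref{lem:Dk*upperbound}). The only point demanding a moment of care is confirming the two identifications $\abs{\mathcal{K}(G;\vd,\vr)}=D_{n-1}(L)$ and $\abs{\mathcal{K}(G';\vd',\vr')}=D_{n-2}(L')$, which follow from the rank computations $\rank(L)=n-1$ (Lorenzini) and $\rank(L')=n-2$ together with the formula $\prod_k\alpha_k=D_t(B)$ for the torsion order of a cokernel recorded in Subsection~\ref{subsec:criticalgroups}.
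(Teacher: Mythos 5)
Your proposal is correct and follows essentially the same route as the paper: identify $\abs{\mathcal{K}(G;\vd,\vr)}=D_{n-1}(L)$ and $\abs{\mathcal{K}(G';\vd',\vr')}=D_{n-2}(L')$ via the Smith normal form discussion in Subsection~\ref{subsec:criticalgroups}, then specialize Theorem~\ref{thm:DkL'} to $k=n-2$. The extra remarks on the $n=2$ edge case and on divisibility of positive integers implying inequality are fine but not needed beyond what the paper states.
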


\begin{proof}
As discussed in Subsection~\ref{subsec:criticalgroups}, $\abs{\mathcal{K}(G;\vd,\vr)}=D_{n-1}(L)$ and $\abs{\mathcal{K}(G';\vd',\vr')}=D_{n-2}(L')$. The result then immediately follows by applying Theorem~\ref{thm:DkL'} with $k=n-2$.
\end{proof}

Whenever $g_n(L)=1$, Corollary~\ref{cor:orderbound}(\ref{part:orderbound}) says that $\abs{\mathcal{K}(G';\vd',\vr')}=d_n^{n-3}\abs{\mathcal{K}(G;\vd,\vr)}$, i.e.\ that the order of $\mathcal{K}(G';\vd',\vr')$ is determined exactly. This applies, in particular, whenever $G$ is a connected, simple graph, as then $\ell_{n,j}=1$ for some $j\in [n-1]$, implying that $g_n(L)=1$. When $g_n(L)=1$, we moreover can precisely determine the group $\mathcal{K}(G';\vd',\vr')$ in terms of $\mathcal{K}(G;\vd,\vr)$, which we do in Corollary~\ref{cor:gcd=1}.
 
When $g_n(L)\neq 1$, Example~\ref{ex:nonsimple} provides an example of a non-simple graph with two arithmetical structures that share the same critical group and the same value of~$d_n$, but after performing the operation they achieve the two distinct bounds in Corollary~\ref{cor:orderbound}(\ref{part:orderbound}).

We now turn our attention to understanding the relationship between $\mathcal{K}(G;\vd,\vr)$ and $\mathcal{K}(G';\vd',\vr')$ more precisely. Recall from Subsection~\ref{subsec:criticalgroups} that $\mathcal{K}(G;\vd,\vr)\cong\bigoplus_{k=1}^{n-1}\mathbb{Z}/\alpha_k\mathbb{Z}$, where the invariant factors $\alpha_k=D_k(L)/D_{k-1}(L)$ are the diagonal entries of the Smith normal form of~$L$. Similarly, $\mathcal{K}(G';\vd',\vr')\cong\bigoplus_{k=1}^{n-2}\mathbb{Z}/\alpha'_k\mathbb{Z}$, where $\alpha'_k=D_k(L')/D_{k-1}(L')$. We begin by establishing some relationships between $\alpha_1'$ and~$\alpha_2$.

\begin{proposition}\label{prop:alpha'1}
The following statements hold:
\begin{enumerate}[label=\textup{(\alph*)},ref=\textup{\alph*}]
    \item $\alpha'_1$ divides $(g_n(L))^2\alpha_1\alpha_2$,\label{part:alpha'1upperbound1}
    \item $\alpha'_1$ divides $d_n\alpha_2$,\label{part:alpha'1upperbound2}
    \item $\alpha'_1$ divides $\gcd((g_n(L))^2\alpha_1,d_n)\alpha_2$,\label{part:alpha'1upperbound3}
    \item $\alpha_1\alpha_2$ divides $\alpha'_1$, and\label{part:alpha'1lowerbound}
    \item $\alpha_1\alpha_2\leq\alpha'_1\leq\gcd((g_n(L))^2\alpha_1,d_n)\alpha_2$.\label{part:alpha'1bounds}
\end{enumerate}
\end{proposition}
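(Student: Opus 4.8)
The plan is to express each statement as a divisibility relation among $D_k(L)$, $D_k^*(L)$, and $d_n$, and then read everything off from Theorem~\ref{thm:DkL'} and Proposition~\ref{prop:D1D2*dividesD1*D2}. First I would record four identities that convert invariant factors into determinantal data. Since $\alpha'_1 = D_1(L')/D_0(L') = D_1(L')$, specializing Theorem~\ref{thm:DkL'}(\ref{part:DkL'equality}) to $k=1$ gives $\alpha'_1 = D_2^*(L)$; and from $\alpha_k = D_k(L)/D_{k-1}(L)$ we have $\alpha_1 = D_1(L)$ and $\alpha_1\alpha_2 = D_2(L)$. The fourth and crucial identity is $D_1^*(L) = d_n$, which holds because the only $1\times1$ submatrix of~$L$ meeting both the last row and the last column is the single entry $\ell_{n,n} = d_n$.

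Parts (\ref{part:alpha'1upperbound1}) and (\ref{part:alpha'1lowerbound}) should then be direct specializations of Theorem~\ref{thm:DkL'} at $k=1$. Part~(\ref{part:DkL'upperbound}) gives $\alpha'_1 = D_1(L') \mid (g_n(L))^2 D_2(L) = (g_n(L))^2\alpha_1\alpha_2$, which is (\ref{part:alpha'1upperbound1}); part~(\ref{part:DkL'lowerbound}) gives $\alpha_1\alpha_2 = D_2(L) \mid D_1(L') = \alpha'_1$, which is (\ref{part:alpha'1lowerbound}).

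The main work is part~(\ref{part:alpha'1upperbound2}), and this is where I expect the real obstacle to lie---not in the deduction itself, which is short, but in that it depends essentially on the nontrivial integer-factorization content of Proposition~\ref{prop:D1D2*dividesD1*D2}. Applying that proposition to $B = L$ yields $D_1(L)D_2^*(L) \mid D_1^*(L)D_2(L)$; substituting $D_1^*(L) = d_n$ and dividing through by $D_1(L)$ gives $\alpha'_1 = D_2^*(L) \mid d_n D_2(L)/D_1(L) = d_n\alpha_2$. I would emphasize that, unlike (\ref{part:alpha'1upperbound1}) and (\ref{part:alpha'1lowerbound}), this bound cannot be obtained purely from the Smith normal form and relies on the arithmetic of the integers rather than on determinantal ideals.

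Finally, parts~(\ref{part:alpha'1upperbound3}) and~(\ref{part:alpha'1bounds}) are formal. For (\ref{part:alpha'1upperbound3}) I would combine (\ref{part:alpha'1upperbound1}) and (\ref{part:alpha'1upperbound2}): $\alpha'_1$ divides both $(g_n(L))^2\alpha_1\alpha_2$ and $d_n\alpha_2$, hence divides their greatest common divisor, and pulling out the common factor~$\alpha_2$ gives $\gcd((g_n(L))^2\alpha_1\alpha_2, d_n\alpha_2) = \gcd((g_n(L))^2\alpha_1, d_n)\alpha_2$. For (\ref{part:alpha'1bounds}), since all quantities involved are positive integers, the divisibilities from (\ref{part:alpha'1upperbound3}) and (\ref{part:alpha'1lowerbound}) immediately upgrade to the displayed inequalities.
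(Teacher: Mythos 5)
Your proposal is correct and follows essentially the same route as the paper: parts (\ref{part:alpha'1upperbound1}) and (\ref{part:alpha'1lowerbound}) from Theorem~\ref{thm:DkL'} at $k=1$, part (\ref{part:alpha'1upperbound2}) from Proposition~\ref{prop:D1D2*dividesD1*D2} together with the observations $D_1^*(L)=d_n$ and $D_2(L)/D_1(L)=\alpha_2$, and parts (\ref{part:alpha'1upperbound3}) and (\ref{part:alpha'1bounds}) as formal consequences. Your identification of (\ref{part:alpha'1upperbound2}) as the step that genuinely needs the integer-factorization content rather than Smith normal form alone matches the paper's framing exactly.
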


\begin{proof}
For~(\ref{part:alpha'1upperbound1}), we have 
\[
\alpha'_1=\frac{D_1(L')}{D_0(L')}=D_1(L').
\]
By Theorem~\ref{thm:DkL'}(\ref{part:DkL'upperbound}), this divides $(g_n(L))^2D_2(L)=(g_n(L))^2\alpha_1\alpha_2$.

For~(\ref{part:alpha'1upperbound2}), we have 
\[
\alpha'_1=\frac{D_1(L')}{D_0(L')}=D_1(L').
\]
By Theorem~\ref{thm:DkL'}(\ref{part:DkL'equality}), this equals $D_2^*(L)$, which, by Proposition~\ref{prop:D1D2*dividesD1*D2}, divides
\[
\frac{D_1^*(L)D_2(L)}{D_1(L)}=d_n\alpha_2.
\]

Part~(\ref{part:alpha'1upperbound3}) follows from (\ref{part:alpha'1upperbound1}) and~(\ref{part:alpha'1upperbound2}) using B\'{e}zout's identity. 

For~(\ref{part:alpha'1lowerbound}), we have $\alpha_1\alpha_2=D_2(L)$. By Proposition~\ref{prop:minorfacts}(\ref{part:DkdividesDk*}), this divides $D_2^*(L)$, which, by Theorem~\ref{thm:DkL'}(\ref{part:DkL'equality}), equals $D_1(L')=\alpha_1'$. 

Part~(\ref{part:alpha'1bounds}) immediately follows from (\ref{part:alpha'1upperbound3}) and~(\ref{part:alpha'1lowerbound}).
\end{proof}

We next establish some relationships between $\alpha'_k$ and $\alpha_{k+1}$ for $k\geq2$.

\begin{theorem}\label{thm:alpha'k}
For all $k\in\{2,3,\dotsc,n-2\}$,
\begin{enumerate}[label=\textup{(\alph*)},ref=\textup{\alph*}]
\item $\alpha'_k$ divides $(g_n(L))^2d_n\alpha_{k+1}$, \label{part:alpha'kupperbound}
\item $d_n\alpha_{k+1}$ divides $(g_n(L))^2\alpha'_k$, and \label{part:alpha'klowerbound}
\item $\dfrac{d_n\alpha_{k+1}}{(g_n(L))^2}\leq\alpha'_k\leq(g_n(L))^2d_n\alpha_{k+1}$.\label{part:alpha'kbounds}
\end{enumerate}
\end{theorem}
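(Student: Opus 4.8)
The plan is to write each invariant factor as a ratio of determinantal gcds, namely $\alpha'_k=D_k(L')/D_{k-1}(L')$ and $\alpha_{k+1}=D_{k+1}(L)/D_k(L)$, and then feed in the bounds from Theorem~\ref{thm:DkL'} at the two consecutive indices $k$ and $k-1$. The hypothesis $k\geq2$ is exactly what makes the index $k-1$ lie in $[n-2]$, so that Theorem~\ref{thm:DkL'} applies there; this is also the structural reason the case $k=1$, where $D_{k-1}(L')=D_0(L')=1$, must instead be handled separately, and more sharply, in Proposition~\ref{prop:alpha'1}.

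For part~(\ref{part:alpha'kupperbound}) I would combine Theorem~\ref{thm:DkL'}(\ref{part:DkL'upperbound}) at index~$k$, which gives $D_k(L')\mid(g_n(L))^2d_n^{k-1}D_{k+1}(L)$, with Theorem~\ref{thm:DkL'}(\ref{part:DkL'lowerbound}) at index~$k-1$, which gives $d_n^{k-2}D_k(L)\mid D_{k-1}(L')$. Writing $D_{k-1}(L')=d_n^{k-2}D_k(L)\,S$ and $(g_n(L))^2d_n^{k-1}D_{k+1}(L)=D_k(L')\,R$ for positive integers $R,S$, a direct substitution into
\[
\frac{(g_n(L))^2d_n\alpha_{k+1}}{\alpha'_k}=(g_n(L))^2d_n\cdot\frac{D_{k+1}(L)}{D_k(L)}\cdot\frac{D_{k-1}(L')}{D_k(L')}
\]
collapses, after all powers of $d_n$ and all the $D$'s cancel, to the product $RS$. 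Since $RS$ is a positive integer, this shows $\alpha'_k$ divides $(g_n(L))^2d_n\alpha_{k+1}$.

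For part~(\ref{part:alpha'klowerbound}) I would run the symmetric argument, now pairing Theorem~\ref{thm:DkL'}(\ref{part:DkL'lowerbound}) at index~$k$ (giving $d_n^{k-1}D_{k+1}(L)\mid D_k(L')$) with Theorem~\ref{thm:DkL'}(\ref{part:DkL'upperbound}) at index~$k-1$ (giving $D_{k-1}(L')\mid(g_n(L))^2d_n^{k-2}D_k(L)$). The analogous substitution into $(g_n(L))^2\alpha'_k/(d_n\alpha_{k+1})$ again telescopes to the product of the two integer cofactors, proving that $d_n\alpha_{k+1}$ divides $(g_n(L))^2\alpha'_k$. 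Part~(\ref{part:alpha'kbounds}) is then immediate: since all quantities involved are positive, the two divisibilities force the stated inequalities.

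The one delicate point is the bookkeeping of the powers of $d_n$: Theorem~\ref{thm:DkL'} at index~$k$ carries $d_n^{k-1}$ while at index~$k-1$ it carries $d_n^{k-2}$, and one must verify that, together with the explicit factor of $d_n$ in the target expression, these balance so that every power of $d_n$ cancels and the ratio reduces precisely to the product of the two integer cofactors. This is the main, if routine, obstacle; once the exponents are tracked correctly the cofactors multiply cleanly and no further input is required.
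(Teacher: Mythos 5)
Your proof is correct and takes essentially the same route as the paper: both write $\alpha'_k=D_k(L')/D_{k-1}(L')$ and $\alpha_{k+1}=D_{k+1}(L)/D_k(L)$ and then feed in Theorem~\ref{thm:DkL'}, with the powers of $d_n$ cancelling exactly as you describe. The only cosmetic difference is that you invoke the packaged divisibilities of Theorem~\ref{thm:DkL'}(\ref{part:DkL'lowerbound}) and~(\ref{part:DkL'upperbound}) at the two indices $k$ and $k-1$, whereas the paper substitutes the equality of Theorem~\ref{thm:DkL'}(\ref{part:DkL'equality}) into the denominator and then reinvokes Proposition~\ref{prop:minorfacts}(\ref{part:DkdividesDk*}) and Lemma~\ref{lem:Dk*upperbound} directly; the integer cofactors appearing in the two arguments are the same.
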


\begin{proof}
We use Theorem~\ref{thm:DkL'}(\ref{part:DkL'equality}) to get
\[
\alpha_k'=\frac{D_k(L')}{D_{k-1}(L')}=\frac{D_k(L')}{d_n^{k-2} D_{k}^*(L)},
\]
which, by Theorem~\ref{thm:DkL'}(\ref{part:DkL'upperbound}), divides 
\[
\frac{(g_n(L))^2d_n^{k-1} D_{k+1}(L)}{d_n^{k-2} D^*_{k}(L)}.
\]
By Proposition~\ref{prop:minorfacts}(\ref{part:DkdividesDk*}), this divides 
\[
\frac{(g_n(L))^2d_n^{k-1} D_{k+1}(L)}{d_n^{k-2} D_{k}(L)}=(g_n(L))^2d_n\alpha_{k+1},
\]
completing the proof of~(\ref{part:alpha'kupperbound}).

For~(\ref{part:alpha'klowerbound}), we have
\[
d_na_{k+1}=d_n\frac{D_{k+1}(L)}{D_k(L)}=\frac{(g_n(L))^2d_n^{k-1}D_{k+1}(L)}{(g_n(L))^2d_n^{k-2}D_k(L)},
\]
which, by Lemma~\ref{lem:Dk*upperbound}, divides 
\[
\frac{(g_n(L))^2d_n^{k-1}D_{k+1}(L)}{d_n^{k-2}D_k^*(L)}.
\]
By Proposition~\ref{prop:minorfacts}(\ref{part:DkdividesDk*}), this divides 
\[
\frac{(g_n(L))^2d_n^{k-1}D_{k+1}^*(L)}{d_n^{k-2}D_k^*(L)}=(g_n(L))^2\frac{D_k(L')}{D_{k-1}(L')}=(g_n(L))^2\alpha'_k,
\]
where the first equality follows by Theorem~\ref{thm:DkL'}(\ref{part:DkL'equality}).

Part~(\ref{part:alpha'kbounds}) immediately follows from (\ref{part:alpha'kupperbound}) and~(\ref{part:alpha'klowerbound}).
\end{proof}

Whenever $g_n(L)=1$, and in particular when $G$ is a simple graph, we can determine $\alpha'_k$ exactly. 

\begin{corollary}\label{cor:gcd=1}
If $g_n(L)=1$ \textup{(}for example, if $G$ is a simple graph\textup{)}, then $\alpha'_1=\alpha_2$ and $\alpha'_k=d_n\alpha_{k+1}$ for all $k\in \{2,3,\dotsc, n-2\}$. In particular,
\[
\mathcal{K}(G';\vd',\vr')\cong\mathbb{Z}/\alpha_{2}\mathbb{Z}\oplus \Biggl(\bigoplus_{k=2}^{n-2}\mathbb{Z}/d_n\alpha_{k+1}\mathbb{Z}\Biggr).
\]
\end{corollary}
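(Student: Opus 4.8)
The plan is to specialize the bounds from Proposition~\ref{prop:alpha'1} and Theorem~\ref{thm:alpha'k} to the case $g_n(L)=1$, where all the factors of $g_n(L)$ collapse. The one genuinely new ingredient I need is the observation that $g_n(L)=1$ forces $\alpha_1=1$: since the entries of row~$n$ are among all the entries of~$L$, the quantity $D_1(L)=\alpha_1$ (the gcd of all entries of~$L$) divides $g_n(L)$, so $g_n(L)=1$ gives $\alpha_1=1$. For the parenthetical case of a simple graph, connectedness guarantees that $v_n$ has a neighbor, so some entry $\ell_{n,j}$ equals $-1$ and hence $g_n(L)=1$ automatically.

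With $\alpha_1=1$ in hand, I would first handle $\alpha'_1$. Proposition~\ref{prop:alpha'1}(\ref{part:alpha'1upperbound1}) gives $\alpha'_1 \mid (g_n(L))^2\alpha_1\alpha_2 = \alpha_2$, while Proposition~\ref{prop:alpha'1}(\ref{part:alpha'1lowerbound}) gives $\alpha_2 = \alpha_1\alpha_2 \mid \alpha'_1$. As both invariant factors are positive, these two divisibilities force $\alpha'_1=\alpha_2$. Equivalently, one can read this off the bounds in Proposition~\ref{prop:alpha'1}(\ref{part:alpha'1bounds}), which pinch $\alpha'_1$ between $\alpha_1\alpha_2=\alpha_2$ and $\gcd((g_n(L))^2\alpha_1,d_n)\alpha_2=\gcd(1,d_n)\alpha_2=\alpha_2$.

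For $k\in\{2,3,\dotsc,n-2\}$, I would invoke Theorem~\ref{thm:alpha'k} with $g_n(L)=1$: part~(\ref{part:alpha'kupperbound}) yields $\alpha'_k \mid d_n\alpha_{k+1}$ and part~(\ref{part:alpha'klowerbound}) yields $d_n\alpha_{k+1}\mid\alpha'_k$, so again positivity forces $\alpha'_k=d_n\alpha_{k+1}$. Substituting these identities into the decomposition $\mathcal{K}(G';\vd',\vr')\cong\bigoplus_{k=1}^{n-2}\mathbb{Z}/\alpha'_k\mathbb{Z}$ recorded in Subsection~\ref{subsec:criticalgroups} then gives the displayed group, with the $k=1$ summand contributing $\mathbb{Z}/\alpha_2\mathbb{Z}$ and the remaining summands contributing $\mathbb{Z}/d_n\alpha_{k+1}\mathbb{Z}$.

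There is no serious obstacle here: once the $g_n(L)$-factors are set to~$1$, every bound in the two earlier results becomes a matching pair of divisibilities, and equality of positive invariant factors is immediate. The only point requiring care -- and the step I would flag -- is the reduction $\alpha_1=1$, since without it the lower bound $\alpha_1\alpha_2\mid\alpha'_1$ would only yield $\alpha'_1=\alpha_1\alpha_2$ rather than the claimed $\alpha_2$. It is precisely the hypothesis $g_n(L)=1$, rather than the weaker conditions $d_n=1$ or simplicity used elsewhere, that collapses $\alpha_1$ to~$1$ and makes the first invariant factor behave differently from the rest.
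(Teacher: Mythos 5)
Your proposal is correct and follows essentially the same route as the paper: specialize Proposition~\ref{prop:alpha'1} and Theorem~\ref{thm:alpha'k} to $g_n(L)=1$, use $\alpha_1=D_1(L)\mid g_n(L)=1$ to reduce $\alpha_1\alpha_2$ to $\alpha_2$, and note that connectedness of a simple graph forces $g_n(L)=1$. The only cosmetic difference is that you derive the equalities from the matching divisibility statements in parts (a)/(d) and (a)/(b) rather than from the pinching inequalities in parts (e) and (c), which is equivalent.
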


\begin{proof}
When $g_n(L)=1$, Proposition~\ref{prop:alpha'1}(\ref{part:alpha'1bounds}) immediately gives that $\alpha'_1=\alpha_1\alpha_2$. In this case we also have that $\alpha_1=D_1(L)=1$, so therefore $\alpha'_1=\alpha_2$. For all $k\in \{2,3,\dotsc, n-2\}$, Theorem~\ref{thm:alpha'k}(\ref{part:alpha'kbounds}) gives that $\alpha'_k=d_n\alpha_{k+1}$. 

When $G$ is a connected, simple graph, $\ell _{n,j}=1$ for some $j\in [n-1]$, and hence $g_{n}(L)=1$. Therefore this result applies to all arithmetical structures on connected, simple graphs.
\end{proof}

\begin{remark}
In the special case when $G$ is a simple graph and $d_n=1$, we have that $\alpha_1=D_1(L)=1$, and therefore Corollary~\ref{cor:gcd=1} indicates that the star-clique operation preserves the critical group, consistent with the observation in \cite[Section~1.8]{L89}. 
\end{remark}

Since $\abs{\mathcal{K}(G';\vd',\vr')}=\alpha'_1\alpha'_2\dotsm\alpha'_{n-2}$ and $\abs{\mathcal{K}(G;\vd,\vr)}=\alpha_1\alpha_2\dotsm\alpha_{n-1}$, Corollary~\ref{cor:orderbound}(\ref{part:orderbound}) implies that
\[
\alpha'_1\alpha'_2\dotsm\alpha'_{n-2}\leq (g_n(L))^2d_n^{n-3}\alpha_1\alpha_2\dotsm\alpha_{n-1}.
\]
This suggests it may be possible to improve the bound $\alpha'_k\leq(g_n(L))^2d_n\alpha_{k+1}$ in Theorem~\ref{thm:alpha'k}(\ref{part:alpha'kbounds}) when $g_n(L)\neq1$. Indeed, we conjecture that $\alpha'_k$ divides $d_n\alpha_{k+1}$ for all $k\in[n-2]$.

\begin{conjecture}\label{conj:betterbound}
For all $k\in[n-2]$, $\alpha'_k$ divides $d_n\alpha_{k+1}$.
\end{conjecture}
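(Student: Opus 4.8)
The plan is to reduce Conjecture~\ref{conj:betterbound} to a statement purely about minors and then to attack that statement by extending the prime-by-prime method behind Proposition~\ref{prop:D1D2*dividesD1*D2}. The case $k=1$ is already Proposition~\ref{prop:alpha'1}(\ref{part:alpha'1upperbound2}), so I would concentrate on $k\in\{2,\dotsc,n-2\}$. For such~$k$, Theorem~\ref{thm:DkL'}(\ref{part:DkL'equality}) gives
\[
\alpha'_k=\frac{D_k(L')}{D_{k-1}(L')}=\frac{d_n^{k-1}D_{k+1}^*(L)}{d_n^{k-2}D_k^*(L)}=d_n\frac{D_{k+1}^*(L)}{D_k^*(L)},\qquad d_n\alpha_{k+1}=d_n\frac{D_{k+1}(L)}{D_k(L)},
\]
so, after cancelling $d_n$ and clearing denominators, $\alpha'_k\mid d_n\alpha_{k+1}$ is equivalent to the divisibility $D_k(L)D_{k+1}^*(L)\mid D_k^*(L)D_{k+1}(L)$. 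This is exactly the $k\geq2$ instance of the generalization of Proposition~\ref{prop:D1D2*dividesD1*D2} recorded in Conjecture~\ref{conj:minors}; it involves no arithmetical structures, so I would prove it for an arbitrary $m\times n$ integer matrix~$B$ and then set $B=L$. It is illuminating to note that, writing $\Delta_j(B)=D_j^*(B)/D_j(B)$ (an integer, since $D_j(B)\mid D_j^*(B)$ by Proposition~\ref{prop:minorfacts}(\ref{part:DkdividesDk*})), the target is equivalent to the monotonicity $\Delta_{k+1}(B)\mid\Delta_k(B)$; this form suggests an induction on~$k$ and parallels the inductive proof of Proposition~\ref{prop:Dk*dividesDk+1*}.

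To prove $D_k(B)D_{k+1}^*(B)\mid D_k^*(B)D_{k+1}(B)$, I would mimic the structure of Proposition~\ref{prop:D1D2*dividesD1*D2}. By two applications of B\'{e}zout's identity it suffices to show that $D_k(B)D_{k+1}^*(B)$ divides $M\,B_{I,J}$ for every $k\times k$ minor $M=B_{K,L}$ with $m\in K$ and $n\in L$ and every $(k+1)\times(k+1)$ minor $B_{I,J}$: taking the greatest common divisor over the starred minors~$M$ recovers the factor $D_k^*(B)$, and then over all $B_{I,J}$ recovers $D_{k+1}(B)$. Working one prime~$p$ at a time and writing $v_p$ for the $p$-adic valuation, set $c=v_p(D_k(B))$ and $e=v_p(D_{k+1}^*(B))$; since $M$ is a starred $k\times k$ minor we have $v_p(M)\geq v_p(D_k^*(B))\geq c$. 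When $B_{I,J}$ already includes the last row and the last column, $v_p(B_{I,J})\geq e$ and there is nothing to do. Otherwise (say $m\notin I$ and $n\notin J$, the genuinely hard configuration), I would enlarge $I,J$ to $I'=I\cup\{m\}$ and $J'=J\cup\{n\}$, forming a $(k+2)\times(k+2)$ submatrix~$B'$ that includes the last row and column, and apply the Desnanot--Jacobi identity deleting two rows from~$I$ and two columns from~$J$. With this choice every cofactor appearing is a $(k+1)\times(k+1)$ minor still containing the last row and column (hence has $p$-valuation at least~$e$) and the doubly-deleted pivot is a starred $k\times k$ minor, so the same valuation bookkeeping as in Proposition~\ref{prop:D1D2*dividesD1*D2}---splitting into cases according to whether a suitable power of~$p$ divides~$b_{m,n}$---should bound $v_p(B'_{k+2,k+2})=v_p(B_{I,J})$ from below and close the estimate.

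The crux, and the reason the statement remains a conjecture rather than a theorem, is the control of the pivot minor produced by Desnanot--Jacobi. In the base case $k=1$ the quantity $D_1^*(B)$ is the single entry $b_{m,n}$, so the pivot is \emph{forced} to equal $b_{m,n}$ and its $p$-valuation is precisely what the argument consumes; this rigidity is exactly what makes Proposition~\ref{prop:D1D2*dividesD1*D2} close cleanly. For $k\geq2$, by contrast, $D_k^*(B)$ and $D_{k+1}^*(B)$ are greatest common divisors over many starred minors, while a single Desnanot--Jacobi relation couples $B_{I,J}$ only to its one complementary pivot minor (together with the fully starred cofactors). There is no evident way to choose the deleted rows and columns so that this pivot realizes the minimal valuation $v_p(D_k^*(B))$, and since different starred minors attain the gcd at different primes, no single embedding can be minimal at every prime simultaneously. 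Overcoming this would require either a cleverer determinant identity expressing $M\,B_{I,J}$ directly in terms of fully starred minors for an \emph{arbitrary} starred $k$-minor~$M$, or an induction (in the spirit of Proposition~\ref{prop:Dk*dividesDk+1*}) whose hypothesis is strong enough to supply a low-valuation pivot yet still self-sustaining; making either route work is where I would focus my effort, guided by first testing the minimal-pivot phenomenon on small nonsimple examples in \texttt{SageMath}.
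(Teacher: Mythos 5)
This statement is a \emph{conjecture} in the paper: the authors do not prove it, and their only remarks about it are exactly the reduction you carry out, namely that by Theorem~\ref{thm:DkL'}(\ref{part:DkL'equality}) one has $\alpha'_k = d_n D_{k+1}^*(L)/D_k^*(L)$ for $k\geq 2$, so that $\alpha'_k \mid d_n\alpha_{k+1}$ is equivalent to $D_k(L)D_{k+1}^*(L) \mid D_k^*(L)D_{k+1}(L)$, i.e.\ to Conjecture~\ref{conj:minors}; and that the case $k=1$ is Proposition~\ref{prop:alpha'1}(\ref{part:alpha'1upperbound2}). Your reduction is correct and matches the paper's own discussion verbatim in substance. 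But your proposal is not a proof, and you say so yourself: the entire content of the conjecture for $k\geq 2$ is the divisibility $D_k(B)D_{k+1}^*(B)\mid D_k^*(B)D_{k+1}(B)$, and your second and third paragraphs only sketch an attack on it before conceding that it fails.

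The gap is exactly where you locate it. In the proof of Proposition~\ref{prop:D1D2*dividesD1*D2}, the Desnanot--Jacobi pivot in the hard case is the single entry $b_{m,n}=D_1^*(B)$, so its $p$-adic valuation is known and can be traded off against $v_p(D_2^*(B))$ prime by prime. For $k\geq 2$ the analogous pivot is one particular starred $k\times k$ minor determined by your choice of deleted rows and columns, and nothing forces it to have valuation equal to $v_p(D_k^*(B))$ at the prime you are working with, let alone at all primes simultaneously; moreover your bookkeeping would also need to handle an arbitrary starred $k$-minor $M$ multiplying $B_{I,J}$, which a single Desnanot--Jacobi relation does not see. (A smaller point: in your sketch the quantity you ultimately need to bound is the cofactor $B_{I,J}$ of the enlarged matrix, not $\det(B')$ itself, so as in the $k=1$ proof you would also need the alternating-sum Laplace-expansion step to transfer the bound from $\det(B')$ to $b_{m,n}B_{I,J}$, suitably generalized.) Since neither you nor the paper closes this step, the statement remains open; your write-up is best read as a correct reduction plus an accurate diagnosis of the obstruction, not as a proof.
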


Conjecture~\ref{conj:betterbound} would imply that $\abs{\mathcal{K}(G';\vd',\vr')}=\alpha'_1\alpha'_2\dotsm\alpha'_{n-2}$ divides
\[
d_n^{n-2}\alpha_2\dotsm\alpha_{n-1}=\frac{{d_n^{n-2}\abs{\mathcal{K}(G;\vd,\vr)}}}{{\alpha_1}}.
\]
With Corollary~\ref{cor:orderbound}(\ref{part:orderupperbound}) and B\'{e}zout's identity, this would show that $\abs{\mathcal{K}(G';\vd',\vr')}$ divides 
\[
\gcd\biggl((g_n(L))^2,\frac{d_n}{\alpha_1}\biggr)d_n^{n-3}\abs{\mathcal{K}(G;\vd,\vr)}.
\]
This would be an improvement of Corollary~\ref{cor:orderbound}(\ref{part:orderupperbound}) whenever $(g_n(L))^2$ does not divide $d_n/\alpha_1$, or equivalently whenever $(g_n(L))^2\alpha_1$ does not divide~$d_n$.

Since
\[
\alpha_k'=\frac{D_k(L')}{D_{k-1}(L')}=\frac{d_n^{k-1}D_{k+1}^*(L)}{d_n^{k-2} D_{k}^*(L)}=d_n\frac{D_{k+1}^*(L)}{ D_{k}^*(L)}
\]
and
\[
d_n\alpha_{k+1}=d_n\frac{D_{k+1}(L)}{D_{k}(L)},
\]
we have that Conjecture~\ref{conj:betterbound} would follow from proving that ${D_{k+1}^*(L)}/{ D_{k}^*(L)}$ divides ${D_{k+1}(L)}/{D_{k}(L)}$, or equivalently that $D_k(L)D_{k+1}^*(L)$ divides $D_{k+1}(L)D_{k}^*(L)$. We believe this statement is true for all matrices with integer entries and record this as another conjecture, as it may be of independent interest to some readers. 

\begin{conjecture}\label{conj:minors}
If $B$ is an $m\times n$ matrix with integer entries, then $D_k(B)D_{k+1}^*(B)$ divides $D_k^*(B)D_{k+1}(B)$ for all $k\in[\min\{m,n\}-1]$. 
\end{conjecture}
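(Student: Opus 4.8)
The plan is to generalize the argument for the base case $k=1$, which is Proposition~\ref{prop:D1D2*dividesD1*D2}. The first step is a reduction. Using the elementary identity $\gcd_i(a_i)\gcd_j(b_j)=\gcd_{i,j}(a_ib_j)$ applied to the family of $k\times k$ minors that include the last row and last column and the family of all $(k+1)\times(k+1)$ minors, one has that $D_k^*(B)D_{k+1}(B)$ equals the greatest common divisor of the products $B_{P,Q}B_{I,J}$ taken over all $k^*$-minors $B_{P,Q}$ and all $(k+1)$-minors $B_{I,J}$. Since an integer divides a greatest common divisor precisely when it divides every member, the conjecture is equivalent to the pairwise statement that $D_k(B)D_{k+1}^*(B)$ divides $B_{P,Q}B_{I,J}$ for every such pair. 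This is exactly how the $k=1$ case is organized, where the unique $1^*$-minor is the single entry $b_{m,n}$. Fixing a prime $p$ and writing $v_p$ for the $p$-adic valuation, it then suffices to prove $v_p(D_k(B))+v_p(D_{k+1}^*(B))\leq v_p(B_{P,Q})+v_p(B_{I,J})$. Because $D_k(B)\mid D_k^*(B)$ and $D_{k+1}(B)\mid D_{k+1}^*(B)$ by Proposition~\ref{prop:minorfacts}(\ref{part:DkdividesDk*}), the conjecture is equivalent to the clean valuation inequality $v_p(D_{k+1}^*(B))-v_p(D_{k+1}(B))\leq v_p(D_k^*(B))-v_p(D_k(B))$ for every prime~$p$: requiring the last row and column should cost no more valuation at the $(k+1)$st level than at the $k$th.

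For the main argument I would fix a pair $(B_{P,Q},B_{I,J})$ and split on the position of $B_{I,J}$ relative to the last row~$m$ and last column~$n$, mirroring the three cases of Proposition~\ref{prop:D1D2*dividesD1*D2}. If $m\in I$ and $n\in J$, then $B_{I,J}$ is itself a $(k+1)^*$-minor, so $D_{k+1}^*(B)\mid B_{I,J}$, while $D_k(B)\mid B_{P,Q}$ since every $k$-minor is divisible by $D_k(B)$; the product divisibility is then immediate. When exactly one of $m\in I$, $n\in J$ fails, I would adapt the row/column replacement-and-expansion technique from the proof of Lemma~\ref{lem:Dk*upperbound}, adjoining the missing last line and Laplace-expanding along it. This rewrites $B_{I,J}$ as an integer combination of minors that do include the last row and last column (hence are divisible by $D_{k+1}^*(B)$, or by it after invoking $D_k^*(B)\mid D_{k+1}^*(B)$ from Proposition~\ref{prop:Dk*dividesDk+1*}) together with terms carrying an explicit matrix entry divisible by $D_1(B)$. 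Tracking these contributions reduces this case to the remaining one.

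The heart of the matter is the case $m\notin I$ and $n\notin J$. Here I would choose a $(k+2)\times(k+2)$ submatrix $B'$ of~$B$ containing the last row and last column, positioned so that these occupy row and column $k+2$, and apply the Desnanot--Jacobi identity with $(i_1,i_2,j_1,j_2)=(1,k+2,1,k+2)$, namely
\[
B'_{[k+2]\setminus\{1,k+2\},\,[k+2]\setminus\{1,k+2\}}\det(B')=B'_{1,1}B'_{k+2,k+2}-B'_{1,k+2}B'_{k+2,1},
\]
in which $B'_{1,1}$ is a $(k+1)^*$-minor, $B'_{k+2,k+2}$ is an ordinary $(k+1)$-minor, and the central factor on the left is an ordinary $k$-minor, while $B'_{1,k+2}$ and $B'_{k+2,1}$ are the nuisance ``mixed'' minors including only one of the last row and last column. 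I would then run the prime-by-prime bookkeeping of Proposition~\ref{prop:D1D2*dividesD1*D2}: split $v_p(D_k(B)D_{k+1}^*(B))$ into its $D_k$ and $D_{k+1}^*$ parts and, according to whether a controlled power of~$p$ divides the connecting factor, either use the Desnanot--Jacobi relation to recover the missing valuation or expand $\det(B')$ directly along its rows. Since the determinantal-ideal version of the statement is false, as the authors note, this integer-factorization step is unavoidable.

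The main obstacle, and the reason this remains a conjecture rather than a theorem, is that for $k\geq2$ the index sets of an arbitrary $k^*$-minor $B_{P,Q}$ and an arbitrary $(k+1)$-minor $B_{I,J}$ need not be nested or even compatible. Consequently no single $(k+2)\times(k+2)$ submatrix simultaneously exhibits both as sub-minors, and there is no one Sylvester-type identity relating them to a $k$-minor and a $(k+1)^*$-minor in the way that $b_{m,n}$ served as the universal connector when $k=1$. Making the valuation bookkeeping succeed uniformly across all incompatible pairs $(P,Q)$ and $(I,J)$---equivalently, establishing $v_p(D_{k+1}^*(B))-v_p(D_{k+1}(B))\leq v_p(D_k^*(B))-v_p(D_k(B))$ directly---is precisely the gap I do not see how to close in general. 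Closing it would presumably require either a more flexible family of determinantal identities or an argument that manipulates the two minimizing minors jointly while respecting $p$-adic valuations.
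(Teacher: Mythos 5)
The statement you were asked to prove is Conjecture~\ref{conj:minors}, which the paper leaves open: the authors prove only the case $k=1$ (Proposition~\ref{prop:D1D2*dividesD1*D2}) and observe that the endpoint case $m=n$, $k=n-1$ reduces to Proposition~\ref{prop:minorfacts}(\ref{part:DkdividesDk*}) via Proposition~\ref{prop:minorfacts}(\ref{part:DnB=Dn*B=detB}). So there is no proof in the paper to compare yours against, and your submission is, by your own account, not a proof either. To its credit, the parts you do carry out are correct: the identity $\gcd_i(a_i)\gcd_j(b_j)=\gcd_{i,j}(a_ib_j)$ legitimately reduces the conjecture to showing $D_k(B)D_{k+1}^*(B)\mid B_{P,Q}B_{I,J}$ for every pair consisting of a $k\times k$ minor-with-last-row-and-column and a $(k+1)\times(k+1)$ minor; the equivalent $p$-adic formulation $v_p(D_{k+1}^*(B))-v_p(D_{k+1}(B))\leq v_p(D_k^*(B))-v_p(D_k(B))$ is the right way to think about it; and the case $m\in I$, $n\in J$ is immediate as you say. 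You have also correctly diagnosed the obstruction: in the $k=1$ argument the unique $1^*$-minor $b_{m,n}$ appears as the corner entry $b'_{3,3}$ of the auxiliary $3\times3$ matrix and hence as the connecting factor in the Desnanot--Jacobi identity, whereas for $k\geq2$ an arbitrary $k^*$-minor $B_{P,Q}$ need not sit inside any $(k+2)\times(k+2)$ submatrix containing $B_{I,J}$, so no single Sylvester-type identity ties the two minimizing minors together.

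Two further cautions. First, your treatment of the mixed cases (exactly one of $m\in I$, $n\in J$ failing) is asserted rather than argued: in the paper's $k=1$ proof these cases are closed completely by the row-replacement expansion, precisely because the coefficient that appears is the single entry $b_{m,j}$ (divisible by $D_1(B)$) and the resulting minors are genuine $2^*$-minors; for $k\geq2$ the analogous expansion produces coefficients that are entries, not $k\times k$ minors, so the claim that this ``reduces to the remaining case'' needs justification and may face the same incompatibility problem. Second, since the authors report that the determinantal-ideal analogue is false, any successful argument must, as you note, exploit integer factorization prime by prime; your outline is a reasonable template for that, but as it stands the central case $m\notin I$, $n\notin J$ with incompatible index sets is a genuine gap, and the statement remains a conjecture.
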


Note that Proposition~\ref{prop:D1D2*dividesD1*D2} proves Conjecture~\ref{conj:minors} in the special case $k=1$. Note also that, in the special case where $m=n$ and $k=n-1$, we have by Proposition~\ref{prop:minorfacts}(\ref{part:DnB=Dn*B=detB}) that $D_{n}^*(B)=D_{n}(B)=\det(B)$, and therefore the conjecture reduces to saying that $D_{n-1}(B)$ divides $D^*_{n-1}(B)$, which is true by Proposition~\ref{prop:minorfacts}(\ref{part:DkdividesDk*}).

\section*{Acknowledgments}
We would like to thank Darij Grinberg and Irena Swanson for helpful conversations about the results of Section~\ref{sec:minors}. We also thank Carlo Beenakker, Darij Grinberg, and the MathOverflow user Quizzical for suggesting proofs of Lemma~\ref{lem:Chio} that allowed us to simplify our original argument. Joel Louwsma would like to thank Villanova University and Alexander Diaz-Lopez for their hospitality during visits in which much of the work described in this paper was completed. Alexander Diaz-Lopez was partially supported by Villanova's University Summer Grant (USG). Joel Louwsma was partially supported by two Niagara University Summer Research Awards.

\bibliographystyle{amsplain}
\bibliography{CriticalGroupOperation}

\end{document}